
\documentclass[11pt]{article}    

%
\usepackage[margin=0.75in]{geometry} 
\usepackage{amsmath,amssymb,amsthm}

%
%

%
%
\newtheorem{theorem}{Theorem}[section]

\newtheorem{lemma}[theorem]{Lemma}

\newtheorem{remark}{Remark}

\newcommand{\al}{\alpha}
\newcommand{\bt}{\beta}

\newcommand{\s}{\sigma}

\newcommand{\be}{\begin{equation}}
\newcommand{\ee}{\end{equation}}
\newcommand{\bea}{\begin{eqnarray}}
\newcommand{\eea}{\end{eqnarray}}

\numberwithin{equation}{section}
\linespread{1.6} 

\begin{document}
\title{Painlev\'{e} IV, Chazy II, and Asymptotics for Recurrence Coefficients of Semi-classical Laguerre Polynomials and Their Hankel Determinants}
\author{Chao Min\thanks{School of Mathematical Sciences, Huaqiao University, Quanzhou 362021, China; e-mail: chaomin@hqu.edu.cn}\: and Yang Chen\thanks{Department of Mathematics, Faculty of Science and Technology, University of Macau, Macau, China; e-mail:\qquad\qquad\qquad\qquad yangbrookchen@yahoo.co.uk}}


\date{\today}
\maketitle
\begin{abstract}
This paper studies the monic semi-classical Laguerre polynomials based on previous work by Boelen and Van Assche \cite{Boelen}, Filipuk et al. \cite{Filipuk} and Clarkson and Jordaan \cite{Clarkson}. Filipuk, Van Assche and Zhang proved that the diagonal recurrence coefficient $\alpha_n(t)$ satisfies the fourth Painlev\'{e} equation. In this paper we show that the off-diagonal recurrence coefficient $\beta_n(t)$ fulfills the first member of Chazy II system. We also prove that the sub-leading coefficient of the monic semi-classical Laguerre polynomials satisfies both the continuous and discrete Jimbo-Miwa-Okamoto $\sigma$-form of Painlev\'{e} IV.
By using Dyson's Coulomb fluid approach together with the discrete system for $\alpha_n(t)$ and $\beta_n(t)$, we obtain the large $n$ asymptotic expansions of the recurrence coefficients and the sub-leading coefficient. The large $n$ asymptotics of the associate Hankel determinant (including the constant term) is derived from its integral representation in terms of the sub-leading coefficient.
\end{abstract}

$\mathbf{Keywords}$: Semi-classical Laguerre polynomials; Recurrence coefficients; Painlev\'{e} IV;

Chazy II system; Hankel determinants; Asymptotic expansions.

$\mathbf{Mathematics\:\: Subject\:\: Classification\:\: 2020}$: 42C05, 33E17, 41A60.

\section{Introduction}
Orthogonal polynomials play an important role in mathematical physics (e.g., random matrix theory, integrable systems), approximation theory, mechanical quadrature, etc. The relationship between recurrence coefficients of semi-classical orthogonal polynomials and Painlev\'{e} equations has been studied extensively over the past decade; see \cite{Boelen,ChenIts,Clarkson,Clarkson3,Dai,Filipuk,Min2022,VanAssche} for reference. Semi-classical orthogonal polynomials are defined as orthogonal polynomials whose weight functions $w(x)$ satisfy the Pearson equation
\be\label{pe}
\frac{d}{dx}(\varrho(x)w(x))=\tau(x)w(x),
\ee
where $\varrho(x)$ and $\tau(x)$ are polynomials with deg $\varrho>2$ or deg $\tau\neq 1$ \cite[Section 1.1.1]{VanAssche}.

Boelen and Van Assche \cite{Boelen} considered the \textit{orthonormal} polynomials with respect to the so-called semi-classical Laguerre weight
\be\label{weight}
w(x)=w(x;t):=x^\lambda\mathrm{e}^{-x^2+tx},\qquad x\in \mathbb{R}^+
\ee
with $\lambda> -1,\;t\in \mathbb{R}$. It is easy to check that (\ref{weight}) is indeed a semi-classical weight since it satisfies the Pearson equation (\ref{pe}) with
$$
\varrho(x)=x,\qquad \tau(x)=-2x^2+tx+1+\lambda.
$$

It was shown in \cite[Theorem 1.1]{Boelen} that the recurrence coefficients of the semi-classical Laguerre polynomials satisfy a discrete system, which is related to an asymmetric discrete Painlev\'{e} IV equation. Later, Filipuk, Van Assche and Zhang \cite[Theorem 1.1]{Filipuk} proved that the recurrence coefficient $b_n(t)$, which is equal to $\al_n(t)$ below, satisfies the (continuous) Painlev\'{e} IV equation.

More recently, Clarkson and Jordaan \cite{Clarkson} studied the \textit{monic} orthogonal polynomials with respect to the weight (\ref{weight}), i.e.,
\be\label{or}
\int_{0}^{\infty}P_{m}(x;t)P_{n}(x;t)w(x;t)dx=h_{n}(t)\delta_{mn},\qquad m, n=0,1,2,\ldots.
\ee
Here $P_{n}(x;t)$ has the monomial expansion
\be\label{expan}
P_{n}(x;t)=x^{n}+\mathrm{p}(n,t)x^{n-1}+\cdots+P_{n}(0;t),\qquad n=0,1,2,\ldots,
\ee
and $\mathrm{p}(n,t)$ denotes the coefficient of $x^{n-1}$ and we set $\mathrm{p}(0,t)=0$.

It is well known that the orthogonal polynomials $P_{n}(x;t)$ obey the three-term recurrence relation of the form \cite{Szego}
\be\label{rr}
xP_{n}(x;t)=P_{n+1}(x;t)+\al_n(t)P_{n}(x;t)+\beta_{n}(t)P_{n-1}(x;t),
\ee
with the initial conditions
$$
P_{0}(x;t)=1,\qquad \beta_{0}(t)P_{-1}(x;t)=0.
$$
The combination of (\ref{or}), (\ref{expan}) and (\ref{rr}) gives
\be\label{al}
\al_{n}(t)=\mathrm{p}(n,t)-\mathrm{p}(n+1,t),
\ee
\be\label{be}
\beta_{n}(t)=\frac{h_{n}(t)}{h_{n-1}(t)}.
\ee
Taking a telescopic sum of (\ref{al}), we have
\be\label{sum}
\sum_{j=0}^{n-1}\al_{j}(t)=-\mathrm{p}(n,t).
\ee

Based on the results in \cite{Boelen} and \cite{Filipuk}, Clarkson and Jordaan \cite{Clarkson} showed the following two lemmas.
\begin{lemma}\label{lemma}
The recurrence coefficients $\al_n(t)$ and $\bt_n(t)$ satisfy the discrete system:
\begin{subequations}\label{ab3}
\be
\al_n(2\al_n-t)+2\bt_n+2\bt_{n+1}=2n+1+\lambda,
\ee
\be
(2\al_n-t)(2\al_{n-1}-t)\bt_n=(2\bt_n-n)(2\bt_n-n-\lambda).
\ee
\end{subequations}
\end{lemma}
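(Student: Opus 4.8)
The plan is to derive both relations from the standard ``ladder operator'' (or equivalently, compatibility) structure attached to a semi-classical weight, using only the Pearson equation for $w(x)=x^\lambda e^{-x^2+tx}$ and the orthogonality relations \eqref{or}–\eqref{rr}. First I would introduce the auxiliary quantities coming from the ladder operators,
\[
A_n(x)=\frac{1}{h_n}\int_0^\infty P_n^2(y)\,\frac{\mathrm{v}'(x)-\mathrm{v}'(y)}{x-y}\,w(y)\,dy,\qquad
B_n(x)=\frac{1}{h_{n-1}}\int_0^\infty P_n(y)P_{n-1}(y)\,\frac{\mathrm{v}'(x)-\mathrm{v}'(y)}{x-y}\,w(y)\,dy,
\]
where $\mathrm{v}(x)=-\ln w(x)=-\lambda\ln x+x^2-tx$ so that $\mathrm{v}'(x)=-\lambda/x+2x-t$. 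Because $\mathrm{v}'$ has only a simple-pole part and a linear part, the divided differences collapse and one gets explicitly $A_n(x)=2+\frac{R_n}{x}$ and $B_n(x)=\frac{r_n}{x}$ for some $t$-dependent constants $R_n,r_n$ (with a boundary term at $x=0$ controlled by $\lambda>-1$). The first step is therefore to compute these and identify $R_n$, $r_n$.

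Next I would invoke the two standard compatibility identities $(S_1)$: $B_{n+1}(x)+B_n(x)=(x-\al_n)A_n(x)-\mathrm{v}'(x)$ and $(S_2)$: $1+(x-\al_n)\big(B_{n+1}(x)-B_n(x)\big)=\bt_{n+1}A_{n+1}(x)-\bt_n A_{n-1}(x)$, together with the sum rule $(S_2')$: $B_n^2(x)+\mathrm{v}'(x)B_n(x)+\sum_{j=0}^{n-1}A_j(x)=\bt_n A_n(x)A_{n-1}(x)$. Substituting the explicit forms of $A_n,B_n$ into $(S_1)$ and matching the $1/x$ coefficient and the constant term yields linear/quadratic relations among $R_n$, $r_n$, $\al_n$; in particular the constant term gives $r_{n+1}+r_n$ or $R_n$ in terms of $\al_n$ and $t$, and matching in $(S_2')$ at the appropriate powers of $x$ produces the combination $\sum_j A_j$ which telescopes. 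Handling $(S_2')$ carefully — it contains $\sum_{j=0}^{n-1}A_j(x)=2n+\big(\sum_{j=0}^{n-1}R_j\big)/x$ and the partial sum $\sum_{j=0}^{n-1}R_j$ must be re-expressed via the telescoping identity $R_n = n+\lambda-2r_n$ (or similar), reducing everything to $r_n$, $\al_n$ — is what I expect to be the main bookkeeping obstacle.

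Once $R_n,r_n$ are eliminated, the first equation of \eqref{ab3} should emerge as the constant-term identity from $(S_1)$ or from $(S_2')$ after using $\bt_n=h_n/h_{n-1}$ and the identification $r_n$ or $R_n$ with $\bt_n+\bt_{n+1}$; concretely one expects $R_n = 2\bt_n$ and $r_n = \sum_{j<n}R_j - \text{(linear)}$, giving $\al_n(2\al_n-t)+2\bt_n+2\bt_{n+1}=2n+1+\lambda$. The second equation of \eqref{ab3} is the ``Riccati-type'' relation: I would obtain it by combining $(S_1)$ and $(S_2')$ to get the exact closed identity $B_n^2(x)+\mathrm{v}'(x)B_n(x)+\sum_{j=0}^{n-1}A_j(x)=\bt_n A_n(x)A_{n-1}(x)$ evaluated so as to isolate $r_n$, which produces something of the shape $(2\al_n-t)(2\al_{n-1}-t)\bt_n=(2\bt_n-n)(2\bt_n-n-\lambda)$ after substituting $R_n=2\bt_n$ and clearing denominators. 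An alternative, shorter route — which I would mention as a fallback — is simply to cite that \cite{Boelen} established a discrete system for the orthonormal recurrence coefficients and \cite{Filipuk} related $b_n$ to $\al_n$, and then perform the change of variables from orthonormal to monic normalization (the off-diagonal coefficient squares), verifying that \eqref{ab3} is the algebraic consequence; but the ladder-operator derivation is cleaner and self-contained. The genuine difficulty in either approach is the careful tracking of the telescoping sums $\sum R_j$ and the $n$-shifts, since an off-by-one there changes $2n+1+\lambda$ into the wrong linear function of $n$.
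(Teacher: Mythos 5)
Your plan follows essentially the same route as the paper: both rest on the ladder-operator quantities $A_n(x)=2+R_n/x$, $B_n(x)=r_n/x$ and the compatibility conditions $(S_1)$ and $(S_2')$, from which the paper imports the identities (\ref{re1})--(\ref{re4}) of Filipuk et al.\ and then obtains Lemma \ref{lemma} simply by substituting (\ref{re1}) and (\ref{re3}) into (\ref{re2}) and (\ref{re5}). Two corrections to your anticipated bookkeeping, though. First, the identifications you guess are wrong: matching the constant term of $(S_1)$ gives $R_n=2\al_n-t$ (not $R_n=2\bt_n$), and it is the constant term of $(S_2')$ that gives $r_n=2\bt_n-n$; the first equation of (\ref{ab3}) then follows from the $1/x$ coefficient of $(S_1)$, namely $r_n+r_{n+1}=\lambda-\al_nR_n$, and the second from the $1/x^2$ coefficient of $(S_2')$, namely $r_n^2-\lambda r_n=\bt_nR_nR_{n-1}$. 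Had you literally used $R_n=2\bt_n$ the linear term $2n+1+\lambda$ would not come out. Second, the telescoping sum $\sum_{j=0}^{n-1}R_j$ that you single out as the main obstacle is not needed for this lemma at all: it enters only the $1/x$ coefficient of $(S_2')$ (the paper's (\ref{re4})), which is used later for the $\sigma$-form and the differential equation for $P_n$, not here. With those adjustments your derivation goes through and reproduces the paper's argument.
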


\begin{lemma}\label{le}
The recurrence coefficients $\al_n(t)$ and $\bt_n(t)$ are given by
$$
\al_n(t)=\frac{1}{2}q_n(s)+\frac{1}{2}t,
$$
$$
\bt_n(t)=-\frac{1}{8}q_n'(s)-\frac{1}{8}q_n^2(s)-\frac{1}{4}s\:q_n(s)+\frac{1}{2}n+\frac{1}{4}\lambda,
$$
with $s=\frac{1}{2}t$, where $q_n(s)$ satisfies the fourth Painlev\'{e} equation \cite{Gromak}
\be\label{p4}
q_n''(s)=\frac{(q_n'(s))^2}{2q_n(s)}+\frac{3}{2}q_n^3(s)+4s\:q_n^2(s)+2(s^2-2n-1-\lambda)q_n(s)-\frac{2\lambda^2}{q_n(s)}.
\ee
\end{lemma}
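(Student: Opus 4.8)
The plan is to couple the discrete system of Lemma~\ref{lemma} with the Toda-type differential-difference equations coming from differentiating the orthogonality relations (\ref{or}) with respect to $t$. Since $\pt w(x;t)=x\,w(x;t)$, differentiating $h_n(t)=\int_0^\infty P_n^2(x;t)\,w(x;t)\,dx$ and using (\ref{rr}) gives $\frac{d}{dt}\ln h_n(t)=\al_n(t)$, while differentiating $\int_0^\infty P_n(x;t)P_{n-1}(x;t)w(x;t)\,dx=0$ and using (\ref{al})--(\ref{be}) gives $\frac{d}{dt}\mathrm{p}(n,t)=-\bt_n(t)$. Combined with (\ref{al}) these produce
\be\label{toda}
\frac{d}{dt}\al_n(t)=\bt_{n+1}(t)-\bt_n(t),\qquad \frac{d}{dt}\bt_n(t)=\bt_n(t)\big(\al_n(t)-\al_{n-1}(t)\big).
\ee

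The formula for $\bt_n$ then follows algebraically. Eliminating $\bt_{n+1}$ from the first equation of (\ref{ab3}) by means of the first relation in (\ref{toda}) yields
\be\label{betaform}
4\bt_n(t)=2n+1+\lambda-\al_n(t)\big(2\al_n(t)-t\big)-2\al_n'(t).
\ee
Substituting $\al_n(t)=\tfrac12 q_n(s)+\tfrac12 t$ with $s=\tfrac12 t$, so that $2\al_n-t=q_n(s)$ and $\al_n'(t)=\tfrac14 q_n'(s)+\tfrac12$, and using $\tfrac18 t\,q_n=\tfrac14 s\,q_n$, reproduces exactly the claimed expression for $\bt_n(t)$.

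It remains to check that $q_n(s):=2\al_n(2s)-2s$ satisfies (\ref{p4}). The quickest route is to invoke \cite[Theorem~1.1]{Filipuk}, where $\al_n$ (their $b_n$) is shown to solve Painlev\'{e}~IV; the affine change of independent and dependent variables above carries that equation into the normalized form (\ref{p4}) with parameters as stated. A self-contained alternative uses the second equation of (\ref{ab3}) together with the second relation in (\ref{toda}). Viewing $\bt_n$ as a function of $s$ and writing $'=d/ds$, the latter reads $\tfrac12\bt_n'=\bt_n(\al_n-\al_{n-1})$, whence $2\al_{n-1}-t=q_n-\bt_n'/\bt_n$; substituting this into the second equation of (\ref{ab3}) gives $q_n\big(q_n\bt_n-\bt_n'\big)=(2\bt_n-n)(2\bt_n-n-\lambda)$. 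Inserting the expression for $\bt_n$ just obtained, expanding both sides, and dividing through by $q_n$ yields (\ref{p4}) after the $q_n^2q_n'$ and $s\,q_nq_n'$ contributions cancel.

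The obstacle here is organizational rather than conceptual. One must keep the rescaling $s=t/2$ consistent throughout---the prime in the formula for $\bt_n$ and in (\ref{p4}) denotes $d/ds$, whereas the Toda equations (\ref{toda}) are written in $t$---and the final verification of (\ref{p4}) by the self-contained route is a careful but entirely mechanical expansion in which several groups of terms must cancel identically. If one instead appeals to \cite{Filipuk}, the only delicate point is matching the precise form and the parameters of their Painlev\'{e}~IV equation with those in (\ref{p4}) under that change of variables.
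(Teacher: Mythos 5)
Your proposal is correct; I checked the algebra and it closes. In particular, eliminating $\bt_{n+1}$ from the first equation of (\ref{ab3}) via $\al_n'=\bt_{n+1}-\bt_n$ gives $4\bt_n=2n+1+\lambda-\al_n(2\al_n-t)-2\al_n'$, which under $2\al_n-t=q_n(s)$, $s=t/2$, is exactly the stated formula for $\bt_n$; and in the self-contained verification the $q_n^2q_n'$ and $s\,q_nq_n'$ terms do cancel, leaving precisely (\ref{p4}) after division by $q_n/8$. Note, though, that the paper gives no proof of Lemma \ref{le} at all: it is quoted from Clarkson and Jordaan \cite{Clarkson} (building on \cite{Filipuk}), so there is no in-text argument to compare against. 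The closest analogue is the paper's Section 3, where the coupled Riccati equations (\ref{ri1})--(\ref{ri2}) for $R_n=2\al_n-t$ and $r_n=2\bt_n-n$ are derived from the same differentiated orthogonality relations (\ref{or1})--(\ref{or2}) that you use to get the Toda equations, and Remark 2 there eliminates $r_n$ to obtain exactly the Painlev\'e IV equation (\ref{p4}) for $R_n(t)=q_n(s)$, observing that this is equivalent to Lemma \ref{le}. Your route is essentially that derivation rewritten in the variables $\al_n,\bt_n$ rather than $R_n,r_n$ (your expression for $4\bt_n$ is the Riccati equation (\ref{ri2}) in disguise), so it supplies a legitimate self-contained proof of a statement the paper leaves as a citation; the only caveat is that you take the discrete system of Lemma \ref{lemma} as given, which is likewise imported from \cite{Boelen,Filipuk} without proof.
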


The Hankel determinant generated by the weight (\ref{weight}) is defined by
$$
D_{n}(t):=\det(\mu_{i+j}(t))_{i,j=0}^{n-1}=\begin{vmatrix}
\mu_{0}(t)&\mu_{1}(t)&\cdots&\mu_{n-1}(t)\\
\mu_{1}(t)&\mu_{2}(t)&\cdots&\mu_{n}(t)\\
\vdots&\vdots&&\vdots\\
\mu_{n-1}(t)&\mu_{n}(t)&\cdots&\mu_{2n-2}(t)
\end{vmatrix}
,
$$
where $\mu_{j}(t)$ is the $j$th moment given by
\bea
\mu_{j}(t):&=&\int_{0}^{\infty}x^{j}w(x;t)dx\nonumber\\
&=&\frac{1}{2}\left[\Gamma\left(\frac{j+1+\lambda}{2}\right){}_{1}F_{1}\left(\frac{j+1+\lambda}{2};\frac{1}{2};\frac{t^2}{4}\right)
+t\:\Gamma\left(\frac{j+2+\lambda}{2}\right){}_{1}F_{1}\left(\frac{j+2+\lambda}{2};\frac{3}{2};\frac{t^2}{4}\right)\right].\nonumber
\eea
The moments can also be expressed in terms of the parabolic cylinder functions \cite{Clarkson}.

It is a known fact that the Hankel determinant can be expressed as a product of the square of $L^2$ norms for the monic orthogonal polynomials \cite[(2.1.6)]{Ismail},
\be\label{hankel}
D_{n}(t)=\prod_{j=0}^{n-1}h_{j}(t).
\ee
In addition, it was shown in \cite{Clarkson} that the Hankel determinant $D_n(t)$, generated by the semi-classical Laguerre weight, satisfies the Toda molecule equation \cite{Sogo}
$$
\frac{d^2}{dt^2}\ln D_n(t)=\frac{D_{n+1}(t)D_{n-1}(t)}{D_n^2(t)}.
$$

The rest of the paper is organized as follows. In the next subsection we give a summary of the main results obtained in this paper. In Section 2, we recall some important results of the paper by Filipuk, Van Assche and Zhang \cite{Filipuk}, from which we derive the second-order differential equation for the semi-classical Laguerre polynomials. In Section 3, we show that the auxiliary quantities $R_n(t)$ and $r_n(t)$ satisfy the coupled Riccati equations. This enables us to obtain the second-order differential equation for $\bt_n(t)$, which is equivalent to a Chazy type equation under the suitable transformation. Furthermore, we find that $\mathrm{p}(n,t)$, the sub-leading coefficient of the monic semi-classical Laguerre polynomials, satisfies both the continuous and discrete Jimbo-Miwa-Okamoto $\s$-form of Painlev\'{e} IV. In Section 4, we derive the large $n$ asymptotics of the recurrence coefficients, the sub-leading coefficient and the Hankel determinant by using the Coulomb fluid approach.
\subsection{Statement of Main Results}
In this subsection, we present the main results obtained in this paper, which are not considered in previous work \cite{Boelen,Clarkson,Filipuk}. For convenience, we will take $\lambda$ in the weight (\ref{weight}) to be strictly positive in the following discussions. This is due to two reasons. Firstly, it makes the weight vanish at the endpoints of the orthogonality interval and then the ladder operator approach can be applied. Secondly, in this case the potential for the weight is \textit{convex} such that the equilibrium density discussed in Section 4 is supported in a single interval (the so-called one-cut case).

For brevity, we will not show the $t$-dependence of all the quantities, such as the recurrence coefficients $\al_n$ and $\bt_n$, considered in this paper from now on. By applying the ladder operators to the monic semi-classical Laguerre polynomials, we have the following theorem.
\begin{theorem}\label{thm1}
The semi-classical Laguerre polynomials $P_n(x)$ satisfy the linear second-order ordinary differential equation
\be\label{odep}
P_n''(x)+\Phi(x) P_n'(x)+\Psi(x) P_n(x)=0,
\ee
where $\Phi(x)$ and $\Psi(x)$ are expressed in terms of $\al_n$ and $\bt_n$ as follows:
\be\label{phi}
\Phi(x)=t-2x+\frac{\lambda}{x}-\frac{t-2 \alpha_n}{x (2x-t+2 \alpha_n)},
\ee
\be\label{psi}
\Psi(x)=2n-\frac{nt-4\al_n\bt_n}{x}-\frac{2(n-2\bt_n)(n+\lambda-2\bt_n)}{x(t-2\al_n)}+\frac{n-2 \beta_n}{x^2}+\frac{(t-2 \alpha_n)(n-2 \beta_n) }{x^2 (2x-t+2 \alpha_n)}.
\ee
\end{theorem}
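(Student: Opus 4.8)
The plan is to derive the second-order ODE for $P_n(x)$ via the ladder-operator (or Shohat–Freud) method, which is the standard approach for orthogonal polynomials with a semi-classical weight. The starting point is the lowering and raising operators
\be\label{ladder}
P_n'(x)=\bt_n A_n(x)P_{n-1}(x)-B_n(x)P_n(x),
\ee
\be
P_{n-1}'(x)=\left(B_n(x)+\mathrm{v}'(x)\right)P_{n-1}(x)-A_{n-1}(x)P_n(x),
\ee
where $\mathrm{v}(x)=-\ln w(x)=-\lambda\ln x+x^2-tx$ is the external potential, so $\mathrm{v}'(x)=-\lambda/x+2x-t$. The coefficient functions are
$$
A_n(x)=\frac{1}{h_n}\int_0^\infty\frac{\mathrm{v}'(x)-\mathrm{v}'(y)}{x-y}\,P_n^2(y)w(y)\,dy,\qquad
B_n(x)=\frac{1}{h_{n-1}}\int_0^\infty\frac{\mathrm{v}'(x)-\mathrm{v}'(y)}{x-y}\,P_n(y)P_{n-1}(y)w(y)\,dy.
$$
First I would compute these integrals explicitly. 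Since $\frac{\mathrm{v}'(x)-\mathrm{v}'(y)}{x-y}=2+\frac{\lambda}{xy}$, the integrals reduce to the normalization and to the moment-type integrals $\int_0^\infty P_n^2(y)w(y)/y\,dy$ and $\int_0^\infty P_n(y)P_{n-1}(y)w(y)/y\,dy$. Introducing the auxiliary quantities
$$
R_n(t):=\frac{\lambda}{h_n}\int_0^\infty\frac{P_n^2(y)}{y}w(y)\,dy,\qquad
r_n(t):=\frac{\lambda}{h_{n-1}}\int_0^\infty\frac{P_n(y)P_{n-1}(y)}{y}w(y)\,dy,
$$
one gets the closed forms
$$
A_n(x)=2+\frac{R_n}{x},\qquad B_n(x)=-t+\frac{r_n}{x}\qquad(\text{up to sign conventions for }r_n),
$$
so $A_n$ and $B_n$ are rational in $x$ with simple poles only at $x=0$. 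This matches the structure of $\Phi$ and $\Psi$ in (\ref{phi})–(\ref{psi}), whose nontrivial poles sit at $x=0$ and at $x=(t-2\alpha_n)/2$; the latter pole is precisely the zero of $A_n(x)$, which is the hallmark of the ladder-operator-derived ODE.

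Next I would eliminate $P_{n-1}$ to obtain the second-order equation. Differentiating (\ref{ladder}) and using the three-term recurrence (\ref{rr}) to re-express $P_{n-1}$, together with the raising operator, yields
\be
P_n''(x)-\left(\mathrm{v}'(x)+\frac{A_n'(x)}{A_n(x)}\right)P_n'(x)
+\left(B_n'(x)-B_n(x)\frac{A_n'(x)}{A_n(x)}+\sum_{j=0}^{n-1}A_j(x)\right)P_n(x)=0.
\ee
This is the universal form of the ODE; it remains to substitute the explicit $A_n,B_n$ and simplify. For $\Phi$, the term $-\mathrm{v}'(x)=\lambda/x-2x+t$ gives the first three terms of (\ref{phi}), while $-A_n'(x)/A_n(x)=\frac{R_n/x^2}{2+R_n/x}=\frac{R_n}{x(2x+R_n)}$ produces the last term once one identifies $R_n=2\alpha_n-t$ (equivalently $-R_n=t-2\alpha_n$). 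For $\Psi$, the sum $\sum_{j=0}^{n-1}A_j(x)=2n+\frac{1}{x}\sum_{j=0}^{n-1}R_j$ requires the telescoping identity $\sum_{j=0}^{n-1}R_j=-\sum_{j=0}^{n-1}(t-2\alpha_j)$, which via (\ref{sum}) becomes $-nt+2(\mathrm{p}(n,t)-\mathrm{p}(0,t))\cdot(-1)$... more cleanly, one uses the two supplementary (compatibility) relations
$$
R_n=2\alpha_n-t,\qquad r_n+r_{n+1}=\text{(linear in }n,\lambda,\text{ and }\alpha_n R_n\text{)},\qquad r_n^2-\text{(stuff)}\,r_n=\bt_n R_n R_{n-1},
$$
i.e. the $S_1,S_2,S_2'$ identities, to rewrite every occurrence of $R_n,r_n,\sum R_j$ in terms of $\alpha_n$ and $\bt_n$ only. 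Matching the residues at $x=0$ (both the simple and the double pole), at $x=(t-2\alpha_n)/2$, and the polynomial part at $x\to\infty$ pins down every coefficient in (\ref{psi}).

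The main obstacle is the bookkeeping in the two supplementary relations: deriving $R_n=2\alpha_n-t$ and the quadratic relation linking $r_n$, $\bt_n$, $R_n$, $R_{n-1}$ (the analogues of the classical $S_1$, $S_2$ identities) requires multiplying the ladder relations appropriately, integrating against $w$, and carefully tracking the moment $\mu_1/\mu_0$-type boundary contributions — here the boundary term at $x=0$ vanishes precisely because $\lambda>0$ makes $w$ vanish there, which is why that hypothesis is imposed. Once those two identities are in hand, expressing $\sum_{j=0}^{n-1}A_j$ in closed form and collecting the partial-fraction pieces is routine but lengthy. I would organize the final simplification by partial fractions with respect to $x$, checking the coefficient of each pole separately against (\ref{phi})–(\ref{psi}); any discrepancy would signal a sign error in the auxiliary relations rather than in the ODE derivation itself.
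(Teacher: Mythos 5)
Your proposal follows essentially the same route as the paper: the authors likewise start from the Chen--Its second-order ODE obtained by eliminating $P_{n-1}$ from the ladder operators, substitute $A_n(x)=2+(2\alpha_n-t)/x$ and $B_n(x)=(2\beta_n-n)/x$ via $R_n=2\alpha_n-t$ and $r_n=2\beta_n-n$, and use the supplementary conditions (\ref{re4}) and (\ref{re5}) to put $\sum_{j=0}^{n-1}A_j(x)$ in closed form in terms of $\alpha_n$ and $\beta_n$ alone. One small correction to your setup: since $\int_0^\infty P_nP_{n-1}w\,dx=0$, the constant part of the divided difference $\bigl(\mathrm{v}'(x)-\mathrm{v}'(y)\bigr)/(x-y)=2+\lambda/(xy)$ contributes nothing to $B_n$, so $B_n(x)=r_n/x$ exactly, with no $-t$ term; the residue-matching check you describe would have flagged this.
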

\begin{theorem}\label{thm2}
The recurrence coefficient $\bt_n(t)$ satisfies the second-order differential equation
\be\label{bde}
\left[ 2\beta_n ''+12 \beta_n^2-4 (2 n+\lambda) \beta_n+n (n+\lambda)\right]^2=t^2 \left[(\beta_n ')^2+4 \beta_n^3-2 (2 n+\lambda) \beta_n^2+n (n+\lambda) \beta_n\right].
\ee
Let $t=\sqrt{2}\:z$ and
$$
\bt_n(t)=\frac{2n+\lambda}{6}-\frac{v(z)}{2}.
$$
Then $v(z)$ admits the first member of Chazy II system \cite[(1.17)]{Cosgrove}
\be\label{ce}
\big(v''-6v^2-\tilde{\al}_1\big)^2=z^2\big((v')^2-4v^3-2\tilde{\al}_1 v-\tilde{\bt}_1\big),
\ee
with the parameters
\be\label{val}
\tilde{\al}_1=-\frac{2}{3}(n^2+n\lambda+\lambda^2),\qquad \tilde{\bt}_1=-\frac{4}{27}(2n^3+3n^2\lambda-3n\lambda^2-2\lambda^3).
\ee
\end{theorem}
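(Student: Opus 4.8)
I would prove the two assertions separately: that $\bt_n$ satisfies the second-order second-degree equation (\ref{bde}), and that the rescaling $t=\sqrt2\,z$, $\bt_n=\frac{2n+\lambda}{6}-\frac{v}{2}$ carries (\ref{bde}) into the Chazy~II equation (\ref{ce}).

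For (\ref{bde}) the plan is to couple the discrete relations of Lemma \ref{lemma} with the $t$-evolution of $\al_n$ and $\bt_n$. The latter follows from $\pt w(x;t)=x\,w(x;t)$ and orthogonality, which give the Toda-type flows $\frac{d}{dt}\ln\bt_n=\al_n-\al_{n-1}$ and $\frac{d}{dt}\al_n=\bt_{n+1}-\bt_n$ (equivalently, one extracts from the ladder operators the coupled first-order Riccati-type system for the auxiliary quantities $R_n$ and $r_n$). Writing $X:=2\al_n-t$, $Y:=2\al_{n-1}-t$ and letting a prime denote $d/dt$, the first identity of Lemma \ref{lemma} at levels $n$ and $n-1$ combined with the flow for $\al_n$ yields
\be\label{plan-XY}
X^2+tX+2X'=2(2n+\lambda)-8\bt_n,\qquad Y^2+tY-2Y'=2(2n+\lambda)-8\bt_n,
\ee
while the flow for $\bt_n$ gives $X-Y=2\bt_n'/\bt_n$ and the second identity of Lemma \ref{lemma} gives $XY=(2\bt_n-n)(2\bt_n-n-\lambda)/\bt_n$.

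The elimination of $X$ and $Y$ is then short. Adding the two equations in (\ref{plan-XY}) and using $X^2+Y^2=(X+Y)^2-2XY$ together with $(X-Y)'=(2\bt_n'/\bt_n)'$ expresses $X+Y$ rationally in $\bt_n,\bt_n',\bt_n''$; squaring that expression and equating it with the identity $(X+Y)^2=(X-Y)^2+4XY$ removes $X$ and $Y$ altogether, and clearing the denominator $\bt_n^2$ collapses the result exactly to (\ref{bde}). I expect the only real difficulty to be organizational — taking the $t$-derivatives of the ladder/recurrence relations correctly and keeping the index shifts $n\mapsto n\pm1$ consistent — since it is the pairing of the two consecutive levels in (\ref{plan-XY}) that makes the elimination of $X+Y$ linear rather than a resultant computation. (Alternatively, using nothing beyond the excerpt, one may substitute the formulas of Lemma \ref{le} for $\al_n$ and $\bt_n$ into (\ref{bde}) and reduce every occurrence of $q_n''$ via Painlev\'e IV (\ref{p4}); this is a direct but heavier verification.)

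For the second part there is essentially no obstacle, only bookkeeping. Substituting $t=\sqrt2\,z$ (so $t$- and $z$-derivatives differ by powers of $\sqrt2$) and $\bt_n=\frac{2n+\lambda}{6}-\frac{v(z)}{2}$ into (\ref{bde}), the left-hand bracket becomes $-\tfrac12\big(v''-6v^2-\tilde\al_1\big)$ and the right-hand bracket becomes $\tfrac18\big((v')^2-4v^3-2\tilde\al_1 v-\tilde\bt_1\big)$ (primes on $v$ being $z$-derivatives); since $t^2=2z^2$, both sides acquire the same overall factor $\tfrac14$, which cancels, and (\ref{bde}) turns into (\ref{ce}). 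The linear and cubic terms coming from $\bt_n^2$ and $\bt_n^3$ combine so that the constants emerge precisely as $\tilde\al_1=-\tfrac23(n^2+n\lambda+\lambda^2)$ and $\tilde\bt_1=-\tfrac4{27}(2n^3+3n^2\lambda-3n\lambda^2-2\lambda^3)$, i.e.\ as in (\ref{val}).
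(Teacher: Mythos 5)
Your proposal is correct. I checked the elimination: with $X=R_n$, $Y=R_{n-1}$, your relations give $t(X+Y)=-\frac{2}{\beta_n}\bigl[2\beta_n''+12\beta_n^2-4(2n+\lambda)\beta_n+n(n+\lambda)\bigr]$ and $(X+Y)^2=(X-Y)^2+4XY=\frac{4}{\beta_n^2}\bigl[(\beta_n')^2+4\beta_n^3-2(2n+\lambda)\beta_n^2+n(n+\lambda)\beta_n\bigr]$, and squaring the first and equating yields exactly (\ref{bde}); the rescaling to (\ref{ce}) with the parameters (\ref{val}) also checks out and coincides with what the paper does (the paper likewise simply asserts that the transformation converts (\ref{bde}) into (\ref{ce})). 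Where you genuinely differ is in how (\ref{bde}) is obtained. The paper first derives the coupled Riccati pair (\ref{ri1})--(\ref{ri2}) for $(r_n,R_n)$, solves (\ref{ri1}) as a quadratic for $R_n$ in terms of $r_n,r_n'$, substitutes the root into (\ref{ri2}), and then removes the square root to land on the second-degree ODE for $r_n=2\beta_n-n$. You instead keep both $R_n$ and $R_{n-1}$ in play, take the first string equation of Lemma \ref{lemma} at the two consecutive levels $n$ and $n-1$ together with the Toda flows, and eliminate via the symmetric functions $X\pm Y$ and $XY$, which keeps the computation rational throughout (no radical to clear) at the price of carrying two levels. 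The raw ingredients are the same --- your $X-Y=2\beta_n'/\beta_n$ and $\beta_nXY=(2\beta_n-n)(2\beta_n-n-\lambda)$ are precisely what the paper packages into (\ref{ri1}), and your equation for $X$ is (\ref{ri2}) rewritten via $r_n=2\beta_n-n$ --- so this is a variant elimination rather than a new idea, but it is arguably the tidier of the two.
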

The Chazy equations were first found by Chazy \cite{Chazy1909,Chazy1911} and subsequently derived by a number of authors. These equations also arose in the problems on the Gaussian, Laguerre and Jacobi weights with jump discontinuities \cite{Lyu2017,Min2019,Witte}.

The following theorem reveals the relation between the sub-leading coefficient $\mathrm{p}(n,t)$ and the Painlev\'{e} IV equation.
\begin{theorem}\label{thm3}
Let
\be\label{sn}
\s_n(s):=-2\mathrm{p}(n,t)-(n+\lambda)t,\qquad\qquad s=\frac{1}{2}t.
\ee
Then $\s_n(s)$ satisfies the Jimbo-Miwa-Okamoto $\s$-form of Painlev\'{e} IV \cite[(C.37)]{Jimbo1981}:
\be\label{jmo}
(\s_n''(s))^2=4(s\s_n'(s)-\s_n(s))^2-4(\s_n'(s)+\nu_0)(\s_n'(s)+\nu_1)(\s_n'(s)+\nu_2),
\ee
with the parameters
$$
\nu_0=0,\qquad \nu_1=2\lambda,\qquad \nu_2=2n+2\lambda.
$$
Moreover, $\s_n(s)$ also admits the discrete $\s$-form of Painlev\'{e} IV:
\bea\label{dis}
&&2\left[\s_n+n(\s_{n-1}-\s_{n+1})+2\lambda s\right]\left[\s_n+(n+\lambda)(\s_{n-1}-\s_{n+1})\right]\nonumber\\
&=&\left[\s_n+2(n+\lambda)s\right](\s_{n+1}-\s_{n-1}+2s)(\s_{n-1}-\s_{n})(\s_n-\s_{n+1}).
\eea
\end{theorem}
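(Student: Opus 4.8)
The plan is to deduce both parts of Theorem~\ref{thm3} from one algebraic ``sum rule'' for the recurrence coefficients, which I regard as the main step:
\be\label{sumrule}
-\mathrm{p}(n,t)=\bt_n\big(2\al_{n-1}+2\al_n-t\big),\qquad n\geq 0,
\ee
with the convention $\bt_0=0$. Besides (\ref{sumrule}), the only ingredients are the discrete system (\ref{ab3}), the definition (\ref{sn}), and the Toda-type identities
\be\label{todaid}
\pt\,\mathrm{p}(n,t)=-\bt_n,\qquad \pt\ln\bt_n=\al_n-\al_{n-1},
\ee
which are standard (they follow on differentiating the orthogonality relations in $t$ and using $\pt w=xw$, or from (\ref{hankel}), (\ref{sum}) and the Toda molecule equation recalled in the Introduction). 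Since $s=\tfrac12 t$, so $\tfrac{d}{ds}=2\pt$, combining (\ref{sn}), (\ref{sumrule}) and (\ref{todaid}) and writing $q_j:=2\al_j-t$ (the quantity of Lemma~\ref{le}) gives
$$\s_n'=4\bt_n-2(n+\lambda),\qquad \s_n''=4\bt_n(q_n-q_{n-1}),\qquad \s_n=2\bt_n(q_{n-1}+q_n)+4\bt_n s-2(n+\lambda)s.$$

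For the continuous $\s$-form, these formulas yield $s\s_n'-\s_n=-2\bt_n(q_{n-1}+q_n)$, while, with $\nu_0=0,\ \nu_1=2\lambda,\ \nu_2=2n+2\lambda$, one computes $(\s_n'+\nu_0)(\s_n'+\nu_1)(\s_n'+\nu_2)=4\bt_n(4\bt_n-2n)\big(4\bt_n-2(n+\lambda)\big)=16\bt_n(2\bt_n-n)(2\bt_n-n-\lambda)$, which the second equation of (\ref{ab3}) turns into $16\bt_n^2 q_{n-1}q_n$. Substituting into (\ref{jmo}) and cancelling the common factor $16\bt_n^2$ (nonzero for $n\geq1$; for $n=0$ one has $\s_0(s)=-2\lambda s$ and (\ref{jmo}) is trivial) reduces (\ref{jmo}) to $(q_n-q_{n-1})^2=(q_n+q_{n-1})^2-4q_{n-1}q_n$, which is obvious.

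For the discrete $\s$-form, (\ref{al}) and (\ref{sn}) give $\s_{j+1}-\s_j=2\al_j-t$ and $\s_n+2(n+\lambda)s=-2\mathrm{p}(n,t)$, so each factor of (\ref{dis}) becomes rational in $\al_{n-1},\al_n,\bt_n,t$: writing $C:=-2\mathrm{p}(n,t)$ and $D:=2\al_{n-1}+2\al_n-t$, the two left-hand brackets of (\ref{dis}) become $C-nD$ and $C-(n+\lambda)D$, and its right-hand side becomes $C\,D\,(2\al_{n-1}-t)(2\al_n-t)$. Now (\ref{sumrule}) reads $C=2\bt_n D$, and the second equation of (\ref{ab3}) gives $(2\al_{n-1}-t)(2\al_n-t)=\bt_n^{-1}(2\bt_n-n)(2\bt_n-n-\lambda)$; inserting both, (\ref{dis}) collapses to the tautology $2D^2(2\bt_n-n)(2\bt_n-n-\lambda)=2D^2(2\bt_n-n)(2\bt_n-n-\lambda)$.

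It remains to prove the sum rule (\ref{sumrule}), which I would do by induction on $n$; this is where the real work lies. The base case $n=0$ is immediate ($\bt_0=0$, $\mathrm{p}(0,t)=0$). For the inductive step, subtract the instance of (\ref{sumrule}) at $n$ from that at $n+1$ and use (\ref{al}); the claim reduces to $\bt_{n+1}(2\al_n+2\al_{n+1}-t)-\bt_n(2\al_{n-1}+2\al_n-t)=\al_n$. Setting $x_j:=2\al_j-t$, the left-hand side equals $(x_n+t)(\bt_{n+1}-\bt_n)+\bt_{n+1}x_{n+1}-\bt_n x_{n-1}$; by the second equation of (\ref{ab3}) at indices $n+1$ and $n$ one has $\bt_{n+1}x_{n+1}=(2\bt_{n+1}-n-1)(2\bt_{n+1}-n-1-\lambda)/x_n$ and $\bt_n x_{n-1}=(2\bt_n-n)(2\bt_n-n-\lambda)/x_n$, and after subtracting and eliminating $2\bt_n+2\bt_{n+1}$ by the first equation of (\ref{ab3}) at index $n$, a short calculation gives $\bt_{n+1}x_{n+1}-\bt_n x_{n-1}=(x_n+t)\big(\tfrac12-(\bt_{n+1}-\bt_n)\big)$, which yields the required identity. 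The bookkeeping in this last step is the only delicate point of the argument.
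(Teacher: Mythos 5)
Your proposal is correct, and it reaches the two $\s$-forms by essentially the same overall architecture as the paper — everything is funnelled through the single identity $\mathrm{p}(n,t)=\bt_n(t-2\al_n-2\al_{n-1})$, i.e.\ your sum rule, which is exactly the paper's equation (\ref{pnt}) — but you establish that key identity by a genuinely different route. The paper reads (\ref{pnt}) off directly from the ladder-operator relation (\ref{re4}), $\sum_{j=0}^{n-1}R_j-t\,r_n=2\bt_n(R_n+R_{n-1})$, combined with (\ref{sum}), (\ref{re1}) and (\ref{re3}); you instead prove it by induction on $n$ using only the discrete system (\ref{ab3}) together with $\al_n=\mathrm{p}(n,t)-\mathrm{p}(n+1,t)$, and I have checked that your inductive step is right: the numerator $(2\bt_{n+1}-n-1)(2\bt_{n+1}-n-1-\lambda)-(2\bt_n-n)(2\bt_n-n-\lambda)$ factors as $(2\bt_{n+1}-2\bt_n-1)(2\bt_n+2\bt_{n+1}-2n-1-\lambda)$, and the second factor equals $-\al_n(2\al_n-t)$ by the first equation of (\ref{ab3}), which gives precisely your claimed cancellation. (You do divide by $x_n=2\al_n-t=R_n(t)$ and, later, by $16\bt_n^2$; both are harmless here since $R_n(t)>0$ and $\bt_n>0$ for $\lambda>0$, $n\ge1$, but this deserves a word.) What your route buys is independence from the relation (\ref{re4}): you need only the discrete system of Lemma \ref{lemma} plus the two Toda-type derivatives $\pt\mathrm{p}(n,t)=-\bt_n$ and $\pt\ln\bt_n=\al_n-\al_{n-1}$, which are the paper's (\ref{or2}) and (\ref{equa1}) and are obtained, as you say, by differentiating the orthogonality relations. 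From that point on your computations coincide with the paper's up to notation: your $q_j$ is $R_j(t)$, your formulas for $\s_n''$ and $s\s_n'-\s_n$ are the paper's (\ref{equ1}) and (\ref{equ2}), the reduction of (\ref{jmo}) to $(q_n-q_{n-1})^2=(q_n+q_{n-1})^2-4q_nq_{n-1}$ mirrors the paper's product step (\ref{pro})--(\ref{brr}), and your treatment of the discrete $\s$-form is the same substitution the paper performs via (\ref{be4}) and (\ref{tr}).
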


The following results are concerned with the large $n$ asymptotics of the recurrence coefficients $\al_n$ and $\bt_n$, the sub-leading coefficient $\mathrm{p}(n,t)$ and the Hankel determinant $D_n(t)$. The derivation is based on the Coulomb fluid approach together with the discrete system satisfied by the recurrence coefficients.
\begin{theorem}\label{abt}
The recurrence coefficients $\al_n$ and $\bt_n$ have the asymptotic expansions as $n\rightarrow\infty$:
\bea\label{aln}
\al_n&=&\sqrt{\frac{2n}{3}}+\frac{t}{6}+\frac{t^2+12 (1+\lambda)}{24 \sqrt{6n} }-\frac{t^4+24 t^2(1+\lambda) -48 (6 \lambda ^2-6 \lambda -5)}{2304 \sqrt{6}\: n^{3/2}}\nonumber\\[10pt]
&+&\frac{t(9 \lambda ^2-2)}{144 n^2}+\frac{t^6+36 t^4(1+\lambda) +144 t^2(66 \lambda ^2+6 \lambda -13) -1728 (8 \lambda ^3+6 \lambda ^2-5 \lambda -3)}{110592 \sqrt{6}\: n^{5/2}}\nonumber\\[10pt]
&+&\frac{t \left[t^2(27 \lambda ^2-7) -12 (9 \lambda ^3+9 \lambda ^2-2 \lambda -2)\right]}{1728 n^3}+O(n^{-7/2}),
\eea
\bea\label{ben}
\bt_n&=&\frac{n}{6}+\frac{t\sqrt{n}}{6 \sqrt{6}}+\frac{t^2+6 \lambda}{72}+\frac{t (t^2+12 \lambda)}{288 \sqrt{6n} }+\frac{2-9 \lambda ^2}{144 n}-\frac{t (t^4+24 \lambda  t^2+3168 \lambda ^2-816)}{27648 \sqrt{6}\: n^{3/2}}\nonumber\\[10pt]
&+&\frac{t^2(7-27 \lambda ^2) +4 \lambda (9 \lambda ^2-2)}{1152 n^2}+\frac{t \left[t^6+36 \lambda  t^4-144 t^2(246 \lambda ^2-61)+1728 \lambda  (64 \lambda ^2-17)\right]}{1327104 \sqrt{6}\: n^{5/2}}\nonumber\\[10pt]
&+&O(n^{-3}).
\eea
\end{theorem}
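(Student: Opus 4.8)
The plan is to combine Dyson's Coulomb fluid heuristic, which supplies the leading-order behaviour of $\al_n$ and $\bt_n$ and reveals that the natural expansion variable is $n^{-1/2}$, with the nonlinear discrete system \eqref{ab3}, which then fixes \emph{all} the higher coefficients recursively. First I would set up the fluid picture: writing $w(x)=\mathrm{e}^{-\mathrm{v}(x)}$ with $\mathrm{v}(x)=x^2-tx-\lambda\ln x$, the hypothesis $\lambda>0$ makes $\mathrm{v}$ convex on $\mathbb{R}^+$, so the equilibrium measure $\psi_n$ carrying charge $n$ is supported on a single interval $[a_n,b_n]$, determined by $\mathrm{v}'(x)=2\,\mathrm{P}\!\int_{a_n}^{b_n}\frac{\psi_n(y)}{x-y}\,dy$ on the support together with $\int_{a_n}^{b_n}\psi_n=n$. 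Evaluating the elementary moment integrals $\int_{a}^{b}x^{k}[(x-a)(b-x)]^{-1/2}dx$ for $k=0,1,2$ and $\int_{a}^{b}x^{-1}[(x-a)(b-x)]^{-1/2}dx=\pi/\sqrt{ab}$ turns the two endpoint conditions into algebra, whose large-$n$ solution is $a_n\to 0$ and $b_n=\tfrac13\bigl(t+\sqrt{t^2+24n+12\lambda}\,\bigr)+O(n^{-3/2})$. The classical fluid formulas $\al_n\approx\tfrac12(a_n+b_n)$ and $\bt_n\approx\tfrac1{16}(b_n-a_n)^2$ then give $\al_n\sim\sqrt{2n/3}$ and $\bt_n\sim n/6$, the leading terms of \eqref{aln}--\eqref{ben}; and the $\sqrt n$ scale shows that $\al_n$ and $\bt_n$ should possess asymptotic expansions in descending powers of $n^{1/2}$.

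Accordingly I would posit $\al_n=\sum_{k\ge -1}c_k\,n^{-k/2}$ and $\bt_n=\sum_{k\ge -2}d_k\,n^{-k/2}$, substitute into both equations of \eqref{ab3}, and expand the shifted quantities $\bt_{n+1}$ and $\al_{n-1}$ through $(n\pm 1)^{-k/2}=n^{-k/2}\sum_{j\ge 0}\binom{-k/2}{j}(\pm 1)^j n^{-j}$. Each equation of \eqref{ab3} becomes a power series in $n^{-1/2}$, and equating the coefficient of a given power yields, at each order, a pair of equations for the next pair of unknowns $c_k,d_{k-1}$, all lower coefficients being already known. At the top order this pair is $c_{-1}^2+2d_{-2}=1$ and $4c_{-1}^2 d_{-2}=(2d_{-2}-1)^2$, which has the spurious solution $c_{-1}=0,\ d_{-2}=\tfrac12$ alongside the physical one $c_{-1}=\sqrt{2/3},\ d_{-2}=\tfrac16$; the fluid input (equivalently $\al_n>0$) selects the latter. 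From the next order on the system is linear in $(c_k,d_{k-1})$, and one checks that it is nonsingular on the physical branch: its determinant is $16c_{-1}(c_{-1}^2-4d_{-2}+1)$, which equals $16\sqrt{2/3}$ there (and vanishes on the spurious branch, another reason to discard it). Hence the recursion closes uniquely, and iterating it to the required orders reproduces \eqref{aln} and \eqref{ben}.

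The mathematical content here is light; the real obstacle is the size and delicacy of the computation. The two series are coupled and must be pushed to $O(n^{-3})$ for $\al_n$ and $O(n^{-5/2})$ for $\bt_n$, which means propagating a dozen-odd coefficients through both the nonlinearities and the index shifts $n\mapsto n\pm 1$ of \eqref{ab3}. What needs the most care is the bookkeeping of which power of $n^{1/2}$ a given coefficient first enters --- for instance the $n^{-1}$ coefficient of $\al_n$ in \eqref{aln} turns out to vanish, so the two expansions are not in phase --- so that the order-by-order matching is organised without skipping an order and the nonsingularity of the step matrix is invoked correctly. In practice this last computation is best carried out with a computer algebra system; it is routine but long, and I expect it, rather than any conceptual difficulty, to be where the effort goes.
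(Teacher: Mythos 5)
Your proposal is correct and follows essentially the same route as the paper: the Coulomb fluid computation fixes the leading terms and justifies the ansatz in descending powers of $n^{1/2}$, and substituting that ansatz into the discrete system (\ref{ab3}) determines all higher coefficients recursively. Your extra remarks on discarding the spurious branch and the nonsingularity of the linear step are sensible refinements the paper leaves implicit, but they do not change the method.
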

\begin{theorem}\label{hd}
The sub-leading coefficient $\mathrm{p}(n,t)$ has the large $n$ asymptotic expansion
\bea\label{hne}
\mathrm{p}(n,t)&=&-\frac{2}{3}\sqrt{\frac{2}{3}}\:n^{3/2}-\frac{nt}{6}-\frac{(t^2+12\lambda)\sqrt{n}}{12\sqrt{6}}-\frac{t(t^2+18\lambda)}{216}-\frac{t^4+24\lambda t^2-288\lambda^2+48}{1152\sqrt{6n}}\nonumber\\[10pt]
&+&\frac{t(9\lambda^2-2)}{144n}+\frac{t^6+36\lambda t^4+144t^2(66\lambda^2-17)-1728\lambda(8\lambda^2-1)}{165888\sqrt{6}\:n^{3/2}}\nonumber\\[10pt]
&+&\frac{t\left[t^2(27\lambda^2-7)+12\lambda(2-9\lambda^2)\right]}{3456n^2}+O(n^{-5/2}).
\eea
\end{theorem}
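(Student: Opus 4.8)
The plan is to reduce everything to the large-$n$ expansions (\ref{aln})--(\ref{ben}) of $\al_n$ and $\bt_n$ from Theorem \ref{abt}, using two links between the sub-leading coefficient and the recurrence coefficients. The first is the identity (\ref{al}), $\al_n=\mathrm{p}(n,t)-\mathrm{p}(n+1,t)$ (equivalently the telescoped form (\ref{sum})). The second is the $t$-differentiation identity $\partial_t\mathrm{p}(n,t)=-\bt_n(t)$, which follows at once from $\partial_t P_n(x;t)=-\bt_n(t)P_{n-1}(x;t)$ --- itself immediate from $\partial_t w(x;t)=x\,w(x;t)$ and orthogonality --- or, alternatively, by combining $\partial_t\ln h_n=\al_n$, the sum rule (\ref{sum}) and the Toda equation for $D_n$. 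I would then posit that $\mathrm{p}(n,t)$ has an asymptotic expansion in descending powers of $n^{1/2}$ starting at $n^{3/2}$, with coefficients that are polynomials in $t$; the shape of (\ref{aln})--(\ref{ben}) makes this ansatz natural.

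Next I would insert the ansatz $\mathrm{p}(n,t)=\sum_{j\ge 0}a_j(t)\,n^{(3-j)/2}$ into (\ref{al}), expand $(n+1)^{(3-j)/2}$ by the binomial series, collect powers of $n$, and match against (\ref{aln}). This pins down $a_0,a_1,a_2,a_4,a_5,\dots$ recursively --- e.g.\ $a_0=-\frac{2}{3}\sqrt{\frac{2}{3}}$, $a_1=-\frac{t}{6}$, $a_2=-\frac{t^2+12\lambda}{12\sqrt6}$ --- but it leaves $a_3$, the coefficient of $n^0$, completely undetermined, since the first difference annihilates any $n$-independent term. To recover $a_3$ I would feed the same ansatz into $\partial_t\mathrm{p}(n,t)=-\bt_n(t)$ and match the $n^0$-coefficient against (\ref{ben}); this gives $a_3'(t)=-\frac{1}{72}(t^2+6\lambda)$, hence $a_3(t)=-\frac{1}{216}\,t(t^2+18\lambda)+C$ for one absolute constant $C$. (Matching the remaining powers of $n$ in $\partial_t\mathrm{p}=-\bt_n$ then serves as a consistency check on the $a_j$ obtained from (\ref{al}).)

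The one step not closed by these elementary relations is the evaluation of $C$, since neither (\ref{al}) nor $\partial_t\mathrm{p}=-\bt_n$ feels an $n$-independent additive constant in $\mathrm{p}$. Here I would invoke Theorem \ref{thm3}. With $\s_n=-2\mathrm{p}(n,t)-(n+\lambda)t$ and $s=\frac12 t$, one computes, using $\partial_t\mathrm{p}=-\bt_n$, that $\s_n'(s)=4\bt_n-2(n+\lambda)$, $\s_n''(s)=8\,\bt_n'(t)$ and $s\s_n'(s)-\s_n(s)=2\bigl(t\bt_n+\mathrm{p}(n,t)\bigr)$, so that the Jimbo--Miwa--Okamoto $\s$-form (\ref{jmo}) collapses to the algebraic relation
\[
\bigl(t\bt_n+\mathrm{p}(n,t)\bigr)^2=4\bigl(\bt_n'(t)\bigr)^2+4\bt_n\,(2\bt_n-n)(2\bt_n-n-\lambda).
\]
Substituting the expansions (\ref{aln})--(\ref{ben}) for $\bt_n$, together with $\bt_n'=\bt_n(\al_n-\al_{n-1})$ for the derivative, taking the square root with the sign fixed by $\mathrm{p}(n,t)=-\sum_{j=0}^{n-1}\al_j(t)<0$, and subtracting $t\bt_n$ reproduces (\ref{hne}), in particular forcing $C=0$. (A shift $\mathrm{p}\mapsto\mathrm{p}+\delta$ alters the right side of (\ref{jmo}) by $32\delta\bigl(t\bt_n+\mathrm{p}\bigr)+16\delta^2$, which cannot vanish for all large $n$ unless $\delta=0$, so the constant is uniquely fixed; in fact the displayed algebraic relation by itself already determines all of (\ref{hne}) directly.)

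The main obstacle is precisely this constant term: every other coefficient is forced by wholly elementary manipulations, whereas $C$ requires importing the Painlev\'{e} structure (or, alternatively, computing the $n^0$-coefficient of $\mathrm{p}(n,0)$ for the weight $x^\lambda\mathrm{e}^{-x^2}$ by a separate argument and checking that it vanishes). A secondary point is rigour: one must know a priori that $\mathrm{p}(n,t)$ admits an asymptotic expansion of the assumed form --- powers of $n^{1/2}$ with polynomial-in-$t$ coefficients --- and that binomial expansion of such a series, term-by-term $t$-differentiation, and extraction of a square root are all legitimate to the order stated, which comes down to propagating the error terms of Theorem \ref{abt} honestly. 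Granting that, Theorem \ref{hd} follows by collecting terms.
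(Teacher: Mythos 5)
Your proposal is correct in substance but takes a noticeably longer road than the paper. The paper's proof is a one-line substitution: it invokes the exact identity (\ref{pnt}), $\mathrm{p}(n,t)=\bt_n(t-2\al_n-2\al_{n-1})$ (already established in the proof of Theorem \ref{thm3} from (\ref{re1}), (\ref{re3}), (\ref{re4}) and (\ref{sum1})), and inserts the expansions (\ref{aln})--(\ref{ben}). Because this identity is linear in $\mathrm{p}(n,t)$ and involves no integration or differencing, the constant-term difficulty that occupies most of your argument simply never arises. Your route --- determining all coefficients except $a_3$ from $\al_n=\mathrm{p}(n,t)-\mathrm{p}(n+1,t)$, getting $a_3$ up to a constant from $\partial_t\mathrm{p}(n,t)=-\bt_n$, and fixing the constant from the $\sigma$-form --- is valid, and your diagnosis that neither the first difference nor the $t$-derivative can see an additive constant is exactly right. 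It is worth noting, though, that your ``algebraic relation'' step secretly reproduces (\ref{pnt}): writing $\bt_n'=\bt_n(\al_n-\al_{n-1})$ and using (\ref{re5}) in the form $(2\bt_n-n)(2\bt_n-n-\lambda)=\bt_n(2\al_n-t)(2\al_{n-1}-t)$, the quantity under your square root becomes $\bt_n^2\bigl[(\al_n-\al_{n-1})^2+ (2\al_n-t)(2\al_{n-1}-t)\bigr]\cdot 4=\bt_n^2(2\al_n+2\al_{n-1}-2t)^2$, a perfect square, and extracting the root with your sign convention gives precisely $\mathrm{p}(n,t)=\bt_n(t-2\al_n-2\al_{n-1})$. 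So the two proofs are ultimately the same computation; what the paper's version buys is that no square root, no integration constant, and no appeal to Theorem \ref{thm3} are needed, while your version buys two independent consistency checks (the difference and derivative matchings) on the expansion (\ref{hne}). Your caveat about rigour applies equally to both, since Theorem \ref{abt} itself is obtained by formal matching.
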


\begin{theorem}\label{hkd}
The Hankel determinant $D_n(t)$ has the large $n$ expansion
\bea
\ln D_n(t)&=&\frac{1}{2} n^2 \ln n-\frac{3+2\ln 6}{4}  n^2+\frac{2}{3} \sqrt{\frac{2}{3}}\: n^{3/2}t+\frac{\lambda}{2}     n \ln n+C_1 n\nonumber\\[10pt]
&+&\frac{t (t^2+36 \lambda)\sqrt{n}}{36 \sqrt{6}}+\frac{3 \lambda ^2-1}{6} \ln n +C_2+\frac{t \left[t^4+40 \lambda  t^2+240 (1-6 \lambda ^2)\right]}{5760 \sqrt{6n}}\nonumber\\[10pt]
&-&\frac{(9 \lambda ^2-2) t^2-12 \lambda(5\lambda^2-2)}{288 n}+O(n^{-3/2}),\nonumber
\eea
where
$$
C_1=\frac{t^2}{12}+\ln (2 \pi )-\frac{\lambda  (1+\ln 6)}{2},
$$
$$
C_2=\frac{t^2(t^2+36 \lambda)  }{864}+\frac{1}{24} \left[48 \zeta'(-1)-24 \ln G(\lambda +1)-12 \lambda ^2 \ln \frac{3}{2}+12 \lambda  \ln (2 \pi )-4 \ln 2+3\ln 3\right]
$$
and $\zeta(\cdot)$ is the Riemann zeta function and $G(\cdot)$ is the Barnes $G$-function which satisfies the relation \cite{Barnes,Voros}
$$
G(z+1)=\Gamma(z)G(z),\qquad\qquad G(1):=1.
$$
\end{theorem}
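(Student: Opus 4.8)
The plan is to combine an exact integral representation of $\ln D_n(t)$ in $t$ with the expansion of $\mathrm{p}(n,t)$ from Theorem~\ref{hd}, anchoring the $t$-independent part by the asymptotics of the Hankel determinant at $t=0$.

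First I would establish $\dfrac{d}{dt}\ln D_n(t)=-\mathrm{p}(n,t)$. Differentiating $h_n=\int_0^{\infty}P_n(x)^2 w(x;t)\,dx$ in $t$, the term $\int_0^{\infty}P_n\,\partial_t P_n\, w\,dx$ vanishes because $\deg\partial_t P_n\le n-1$, and since $\partial_t w=x\,w$ and $xP_n=P_{n+1}+\al_n P_n+\bt_n P_{n-1}$ by~(\ref{rr}), one gets $\dfrac{d}{dt}\ln h_n=\dfrac{1}{h_n}\int_0^{\infty}xP_n^2 w\,dx=\al_n$. Summing over $j=0,\dots,n-1$, using $D_n=\prod_{j=0}^{n-1}h_j$ and the telescoping identity~(\ref{sum}), gives $\dfrac{d}{dt}\ln D_n(t)=\sum_{j=0}^{n-1}\al_j=-\mathrm{p}(n,t)$, hence
\[
\ln D_n(t)=\ln D_n(0)-\int_0^{t}\mathrm{p}(n,\tau)\,d\tau .
\]
Inserting the expansion of Theorem~\ref{hd} and integrating term by term --- legitimate since that expansion is uniform for $\tau$ in bounded intervals --- reproduces every $t$-dependent term of the claim: the $n^{3/2}$ term of $\mathrm{p}$ yields $\tfrac23\sqrt{2/3}\,n^{3/2}t$, the $O(n)$ term the $\tfrac{t^2}{12}$ part of $C_1 n$, the $O(\sqrt n)$ term $\tfrac{t(t^2+36\lambda)}{36\sqrt6}\sqrt n$, the $O(1)$ term the $\tfrac{t^2(t^2+36\lambda)}{864}$ part of $C_2$, and the $O(n^{-1/2})$, $O(n^{-1})$ terms the $\tfrac{t[t^4+40\lambda t^2+240(1-6\lambda^2)]}{5760\sqrt{6n}}$ and $-\tfrac{(9\lambda^2-2)t^2}{288n}$ terms. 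This part is routine.

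The substantive step is the large-$n$ asymptotics of $\ln D_n(0)$, the Hankel determinant of $x^{\lambda}e^{-x^2}$ on $(0,\infty)$; it must produce the $t$-independent data $\tfrac12 n^2\ln n-\tfrac{3+2\ln6}{4}n^2+\tfrac{\lambda}{2}n\ln n+\big(\ln(2\pi)-\tfrac{\lambda(1+\ln6)}{2}\big)n+\tfrac{3\lambda^2-1}{6}\ln n+\bar C_2$, where $\bar C_2$ is the $t$-independent part of $C_2$. The $n^2\ln n$, $n^2$, $n\ln n$ and $n$ terms come from the equilibrium-measure analysis (one-cut, hard edge at $0$ since $\lambda>0$) already used for the recurrence coefficients; the delicate ingredients are the exact $\ln n$-coefficient $\tfrac{3\lambda^2-1}{6}$ and the constant $\bar C_2$ (which carries $\zeta'(-1)$ and $\ln G(\lambda+1)$), not accessible to the fluid heuristic. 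To get them I would compare with the exactly solvable Laguerre weight via the degeneration $t\to-\infty$: with $t=-\tau$, the substitution $x=u/\tau$ gives $\mu_j(-\tau)=\Gamma(j+\lambda+1)\,\tau^{-(j+\lambda+1)}\big(1+O(\tau^{-2})\big)$, so $D_n(-\tau)=\tau^{-n(n+\lambda)}\,G(n+1)\dfrac{G(n+\lambda+1)}{G(\lambda+1)}\big(1+O(\tau^{-2})\big)$, the prefactor being the classical Laguerre Hankel determinant $\prod_{j=0}^{n-1}j!\,\Gamma(j+\lambda+1)$; combined with $\ln D_n(0)=\ln D_n(-\tau)-\int_{-\tau}^{0}\mathrm{p}(n,t)\,dt$ this writes $\ln D_n(0)$ as a Barnes-$G$ combination (whose asymptotics supply $\zeta'(-1)$, $\ln G(\lambda+1)$ and the $\tfrac{\lambda^2}{2}\ln n$ hard-edge term) minus the regularized limit $\lim_{\tau\to\infty}\big(\int_{-\tau}^{0}\mathrm{p}(n,t)\,dt+n(n+\lambda)\ln\tau\big)$. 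I expect the main obstacle to be controlling this last quantity to the needed orders in $n$ --- equivalently, proving a form of the $\mathrm{p}(n,t)$ expansion uniform enough in $t$ to be integrated down to $t=-\infty$, or else invoking the known Riemann--Hilbert asymptotics for this Freud-type weight --- rather than the elementary $t$-integration. I would finish by assembling the two pieces and cross-checking against the Toda relation $\partial_t^2\ln D_n=\bt_n$ and the direct product $D_n=h_0^{\,n}\prod_{j=1}^{n-1}\bt_j^{\,n-j}$ evaluated by Euler--Maclaurin from Theorem~\ref{abt}.
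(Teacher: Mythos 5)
Your treatment of the $t$-dependent part coincides with the paper's: both establish $\frac{d}{dt}\ln D_n(t)=-\mathrm{p}(n,t)$ (the paper via (\ref{hnt}), (\ref{hn1}) and (\ref{or1})) and integrate the expansion of Theorem \ref{hd} from $0$ to $t$. The genuine gap is in your route to the $t$-independent data, i.e.\ to $\ln D_n(0)$. The degeneration $t=-\tau\to-\infty$ to the classical Laguerre determinant is fine for \emph{fixed} $n$, but to extract the large-$n$ constants you must evaluate $\lim_{\tau\to\infty}\bigl(\int_{-\tau}^{0}\mathrm{p}(n,t)\,dt+n(n+\lambda)\ln\tau\bigr)$ asymptotically in $n$, and the only expansion of $\mathrm{p}(n,t)$ available (Theorem \ref{hd}) has coefficients that are polynomials in $t$ of unboundedly growing degree: it is valid only for $t$ in compact sets, and the limits $n\to\infty$ and $\tau\to\infty$ do not commute (for $\tau\sim\sqrt{n}$ the equilibrium problem itself changes character). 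So the quantity carrying $\zeta'(-1)$, $\ln G(\lambda+1)$ and the $\frac{3\lambda^2-1}{6}\ln n$ term is not actually computed; you correctly flag this as ``the main obstacle'' but do not resolve it. The paper avoids the issue entirely by writing $D_n(0)=\frac{n^{n(n+\lambda)/2}}{n!}Z_n(1)$ and quoting the rigorous asymptotics of Dea\~no and Simm for the interpolating partition function $Z_n(s)$ between the Laguerre weight ($s=0$) and the $t=0$ semi-classical weight ($s=1$); that citation is where the hard constants come from, and replacing it by your $t\to-\infty$ argument would require uniform-in-$t$ (Riemann--Hilbert type) asymptotics that neither the paper nor your proposal supplies.

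A secondary structural difference: the paper first uses the Toda-type relation $\bt_n=D_{n+1}D_{n-1}/D_n^2$ together with Theorem \ref{abt} to determine the \emph{entire} $n$-dependence of $\ln D_n(t)$ up to the two integration constants $C_1$ and $C_2$ of the second-order difference equation; the $t$-integration and the $D_n(0)$ asymptotics are then used only to pin down $C_1$ and $C_2$. This matters quantitatively: the $t$-independent piece $\frac{12\lambda(5\lambda^2-2)}{288n}$ of the $O(n^{-1})$ term lies beyond the $O(n^{-1})$ accuracy of the quoted $D_n(0)$ expansion (\ref{dn0}) and is produced by the Toda recursion, not by the $t=0$ anchor. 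In your scheme, where the Toda relation is demoted to a cross-check and all $t$-independent terms are to come from $\ln D_n(0)$, that coefficient would be out of reach even if the degeneration argument were repaired.
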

Finally, we make a remark about the differential equation (\ref{odep}) as $n\rightarrow\infty$. Substituting the large $n$ expansion of $\al_n$ and $\bt_n$ in Theorem \ref{abt} into (\ref{phi}) and (\ref{psi}), we obtain
$$
\Phi(x)=t-2x+\frac{1+\lambda}{x}+O(n^{-1/2}),
$$
$$
\Psi(x)=\frac{4\sqrt{6}\:n^{3/2}}{9x}+O(n).
$$
Considering the equation
$$
\tilde{P}_n''(x)+\left(t-2x+\frac{1+\lambda}{x}\right)\tilde{P}_n'(x)+\frac{4\sqrt{6}\:n^{3/2}}{9x}\tilde{P}_n(x)=0,
$$
one would find that this is the biconfluent Heun equation (BHE) \cite[p. 194 (1.2.5)]{Ronveaux}. The relations between orthogonal polynomials and Heun's differential equations have been discussed in recent years; see \cite{Alhaidari,CFZ,MNR} for reference.
\section{Ladder Operators and Second-order Differential Equation}
The ladder operator approach has been applied to solve problems on orthogonal polynomials for many years. This approach is especially useful to establish the relations between Painlev\'{e} equations and recurrence coefficients of semi-classical orthogonal polynomials. See \cite{ChenIts,Dai,Filipuk,Min2022,VanAssche} for reference.

Following the general set-up (see, e.g., \cite{ChenIts}) and noting that $w(0)=w(\infty)=0$ since we require $\lambda>0$ in (\ref{weight}), Filipuk, Van Assche and Zhang \cite{Filipuk} showed that the monic semi-classical Laguerre polynomials $P_n(x)$ satisfy the following ladder operator equations:
\be\label{ra}
\left(\frac{d}{dx}+B_{n}(x)\right)P_{n}(x)=\beta_{n}A_{n}(x)P_{n-1}(x),
\ee
\be\label{lo}
\left(\frac{d}{dx}-B_{n}(x)-\mathrm{v}'(x)\right)P_{n-1}(x)=-A_{n-1}(x)P_{n}(x),
\ee
where $\mathrm{v}(x)$ is the potential
\be\label{po}
\mathrm{v}(x)=-\ln w(x)=x^2-tx-\lambda \ln x
\ee
and
\be\label{an}
A_{n}(x)=2+\frac{R_n(t)}{x},
\ee
\be\label{bn}
B_{n}(x)=\frac{r_n(t)}{x},
\ee
with
$$
R_{n}(t):=\frac{\lambda}{h_{n}}\int_{0}^{\infty}P_{n}^2(y)y^{\lambda-1}\mathrm{e}^{-y^2+ty}dy,
$$
$$
r_{n}(t):=\frac{\lambda}{h_{n-1}}\int_{0}^{\infty}P_{n}(y)P_{n-1}(y)y^{\lambda-1}\mathrm{e}^{-y^2+ty}dy.
$$

Substituting (\ref{an}) and (\ref{bn}) into the compatibility conditions for the ladder operators, Filipuk et al. \cite{Filipuk} obtained the following results.
\begin{lemma}
The auxiliary quantities $R_n(t),\; r_n(t)$ and the recurrence coefficients $\al_n,\; \bt_n$ satisfy the relations:
\be\label{re1}
R_n(t)=2\al_n-t,
\ee
\be\label{re2}
r_n(t)+r_{n+1}(t)=\lambda-\al_n R_n(t),
\ee
\be\label{re3}
r_n(t)=2\bt_n-n,
\ee
\be\label{re5}
r_n^2(t)-\lambda r_n(t)=\bt_n R_n(t) R_{n-1}(t),
\ee
\be\label{re4}
\sum_{j=0}^{n-1}R_j(t)-t r_n(t)=2\bt_n(R_{n}(t)+R_{n-1}(t)).
\ee
\end{lemma}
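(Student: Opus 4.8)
The plan is to obtain all five identities by inserting the explicit forms of the ladder coefficients into the standard compatibility (supplementary) conditions for the pair (\ref{ra})--(\ref{lo}). These read
\begin{equation}
(S_1):\qquad B_{n+1}(x)+B_{n}(x)=(x-\al_n)A_{n}(x)-\mathrm{v}'(x),
\end{equation}
\begin{equation}
(S_2):\qquad 1+(x-\al_n)\big(B_{n+1}(x)-B_{n}(x)\big)=\bt_{n+1}A_{n+1}(x)-\bt_n A_{n-1}(x),
\end{equation}
together with the telescoped consequence
\begin{equation}
(S_2'):\qquad B_n^{2}(x)+\mathrm{v}'(x)B_n(x)+\sum_{j=0}^{n-1}A_j(x)=\bt_n A_n(x)A_{n-1}(x),
\end{equation}
where $\mathrm{v}'(x)=2x-t-\lambda/x$ by (\ref{po}). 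All of (\ref{re1})--(\ref{re5}) should drop out of $(S_1)$ and $(S_2')$ after matching coefficients of powers of $x$.

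First I would substitute $A_n(x)=2+R_n/x$ and $B_n(x)=r_n/x$ into $(S_1)$. The right-hand side collapses to $\big(R_n-2\al_n+t\big)+\big(\lambda-\al_n R_n\big)x^{-1}$, whereas the left-hand side is purely of order $x^{-1}$; equating the constant terms gives (\ref{re1}), and equating the $x^{-1}$ terms gives (\ref{re2}). Next I would feed the same expressions into $(S_2')$. Using $\mathrm{v}'(x)B_n(x)=2r_n-tr_nx^{-1}-\lambda r_nx^{-2}$, $B_n^2(x)=r_n^2x^{-2}$ and $\sum_{j=0}^{n-1}A_j(x)=2n+x^{-1}\sum_{j=0}^{n-1}R_j$, the left-hand side becomes a Laurent polynomial with no $x^{1}$ term, constant term $2r_n+2n$, $x^{-1}$ term $\sum_{j=0}^{n-1}R_j-tr_n$, and $x^{-2}$ term $r_n^2-\lambda r_n$, while the right-hand side equals $\bt_n\big(4+2(R_n+R_{n-1})x^{-1}+R_nR_{n-1}x^{-2}\big)$. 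Comparing the $x^{0}$, $x^{-1}$ and $x^{-2}$ coefficients then yields (\ref{re3}), (\ref{re4}) and (\ref{re5}) respectively.

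The coefficient matching is entirely mechanical, so the only point requiring genuine care is the input rather than the output: one must know that $A_n$ and $B_n$ are \emph{exactly} of the form (\ref{an})--(\ref{bn}). This rests on the identity $\frac{\mathrm{v}'(x)-\mathrm{v}'(y)}{x-y}=2+\frac{\lambda}{xy}$ for the potential (\ref{po}) plugged into the Cauchy-type integral representations of $A_n$ and $B_n$, together with the vanishing of the boundary terms at $x=0$ and $x=\infty$ in the integration-by-parts derivation of $(S_1)$, $(S_2)$ and $(S_2')$ — which is precisely where the hypothesis $\lambda>0$ (so that $w(0)=w(\infty)=0$) enters. Once these structural facts are in hand there is no further obstacle; if preferred, $(S_2')$ itself can be established independently by induction on $n$ from $(S_1)$ and $(S_2)$.
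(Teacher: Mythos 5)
Your proposal is correct and follows essentially the same route as the paper, which states that these relations are obtained by substituting (\ref{an}) and (\ref{bn}) into the compatibility conditions for the ladder operators (citing Filipuk et al.\ \cite{Filipuk}); your coefficient matching in $(S_1)$ and $(S_2')$ checks out and correctly recovers (\ref{re1})--(\ref{re5}). You also rightly flag that the hypothesis $\lambda>0$ (so $w(0)=w(\infty)=0$) is what makes the boundary terms vanish in the derivation of the ladder relations, which is exactly the point the paper emphasizes at the start of Section 2.
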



We would like to point out that one will obtain the results in Lemma \ref{lemma} by substituting (\ref{re1}) and (\ref{re3}) into (\ref{re2}) and (\ref{re5}) respectively. We now prove Theorem \ref{thm1}.

\begin{proof}[$\mathbf{Proof\; of\; Theorem\; \ref{thm1}}$]
It was shown in \cite{ChenIts} that the orthogonal polynomials $P_n(x)$ satisfy the second-order differential equation
\be\label{ode}
P_n''(x)-\left(\mathrm{v}'(x)+\frac{A_{n}'(x)}{A_{n}(x)}\right)P_n'(x)+\left(B_{n}'(x)-B_{n}(x)\frac{A_{n}'(x)}{A_{n}(x)}
+\sum_{j=0}^{n-1}A_{j}(x)\right)P_n(x)=0,
\ee
which is obtained by eliminating $P_{n-1}(x)$ from the ladder operators (\ref{ra}) and (\ref{lo}).

Next, we will express the coefficients in the above equation in terms of $\al_n$ and $\bt_n$.
Inserting (\ref{re1}) into (\ref{an}) and (\ref{re3}) into (\ref{bn}) gives
\be\label{abz}
A_n(x)=2+\frac{2\al_n-t}{x},\qquad\qquad B_n(x)=\frac{2\bt_n-n}{x}.
\ee
From (\ref{an}) and with the aid of (\ref{re4}), we have
$$
\sum_{j=0}^{n-1}A_{j}(x)=2n+\frac{\sum_{j=0}^{n-1}R_{j}(t)}{x}=2n+\frac{tr_n(t)+2\bt_nR_n(t)+2\bt_nR_{n-1}(t)}{x}.
$$
It follows from (\ref{re5}) that
$$
\bt_nR_{n-1}(t)=\frac{r_n(t)(r_n(t)-\lambda)}{R_n(t)}.
$$
By making use of (\ref{re1}) and (\ref{re3}), we then obtain
\be\label{suma}
\sum_{j=0}^{n-1}A_{j}(x)=2n-\frac{nt-4\al_n\bt_n}{x}-\frac{2(n-2\bt_n)(n+\lambda-2\bt_n)}{x(t-2\al_n)}.
\ee
Substituting (\ref{abz}) and (\ref{suma}) into (\ref{ode}), we establish the theorem.
\end{proof}

\section{Chazy II and $\s$-form of Painlev\'{e} IV}
In this section, we will prove that the recurrence coefficient $\bt_n$ is related to a Chazy type equation, and the sub-leading coefficient $\mathrm{p}(n,t)$ satisfies the Jimbo-Miwa-Okamoto $\s$-form of Painlev\'{e} IV. To prove the results, we introduce the following lemma at first.
\begin{lemma}
The auxiliary quantities $R_n(t)$ and $r_n(t)$ admit the coupled Riccati equations:
\be\label{ri1}
r_n'(t)=\frac{n+r_n(t)}{2}R_n(t)-\frac{r_n^2(t)-\lambda r_n(t)}{R_n(t)},
\ee
\be\label{ri2}
R_n'(t)=\lambda-2r_n(t)-\frac{R_n(t)(t+R_n(t))}{2}.
\ee
\end{lemma}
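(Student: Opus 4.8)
The plan is to differentiate the defining integral representations of $R_n(t)$ and $r_n(t)$ in $t$ and then collapse the result using the three-term recurrence (\ref{rr}) and the algebraic relations (\ref{re1}), (\ref{re3}) and (\ref{re5}). All integrals converge and may be differentiated under the integral sign since $\lambda>0$ and the weight has Gaussian decay. Two preliminary facts are convenient. First, since $\pt w(x;t)=x\,w(x;t)$, differentiating $h_n=\int_0^\infty P_n^2\,w\,dx$ gives $h_n'=\int_0^\infty x P_n^2\,w\,dx=\al_n h_n$ (the cross term $\int 2P_nP_n'\,w$ vanishes because $P_n'$ has degree $<n$, and the last equality is (\ref{rr})); in particular $\pt\ln h_n=\al_n$. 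Second, writing $\pt P_n=\sum_{k=0}^{n-1}c_kP_k$ (the leading coefficient is annihilated since $P_n$ is monic), differentiating $\int_0^\infty P_nP_k\,w\,dx=0$ and using that $\pt P_k$ has degree $\le k-1$ together with (\ref{rr}) shows $c_k=0$ for $k<n-1$ and $c_{n-1}=-h_n/h_{n-1}=-\bt_n$, so $\pt P_n=-\bt_n P_{n-1}$.

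For (\ref{ri2}): because $y^{\lambda-1}\mathrm{e}^{-y^2+ty}=w(y)/y$, applying $\pt$ to this measure returns $w(y)$. Differentiating the identity $R_nh_n/\lambda=\int_0^\infty P_n^2(y)\,y^{\lambda-1}\mathrm{e}^{-y^2+ty}dy$ and using $\pt P_n=-\bt_nP_{n-1}$, $\bt_nh_{n-1}=h_n$ and the definition of $r_n$ gives $h_n(\lambda-2r_n)/\lambda$ on the right-hand side, while $\pt\ln h_n=\al_n$ gives $(R_n'+\al_nR_n)h_n/\lambda$ on the left-hand side. Cancelling $h_n/\lambda$ and substituting $\al_n=\tfrac12(R_n+t)$ from (\ref{re1}) yields (\ref{ri2}).

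For (\ref{ri1}): differentiate the identity $r_nh_{n-1}/\lambda=\int_0^\infty P_n(y)P_{n-1}(y)\,y^{\lambda-1}\mathrm{e}^{-y^2+ty}dy$ in $t$. The term in which $\pt$ hits the measure equals $\int_0^\infty P_nP_{n-1}\,w(y)\,dy=0$, and $\pt(P_nP_{n-1})=-\bt_nP_{n-1}^2-\bt_{n-1}P_nP_{n-2}$ contributes $\int_0^\infty P_{n-1}^2\,y^{\lambda-1}\mathrm{e}^{-y^2+ty}dy=h_{n-1}R_{n-1}/\lambda$ and the cross integral $\int_0^\infty P_nP_{n-2}\,y^{\lambda-1}\mathrm{e}^{-y^2+ty}dy$, which is not among the defining quantities. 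This is the one place needing care: I would invoke (\ref{rr}) in the form $\bt_{n-1}P_{n-2}=yP_{n-1}-P_n-\al_{n-1}P_{n-1}$, which rewrites the cross integral in terms of $\int_0^\infty P_nP_{n-1}\,w\,dy=0$, $\int_0^\infty P_n^2\,y^{\lambda-1}\mathrm{e}^{-y^2+ty}dy=h_nR_n/\lambda$ and $\int_0^\infty P_nP_{n-1}\,y^{\lambda-1}\mathrm{e}^{-y^2+ty}dy=r_nh_{n-1}/\lambda$. Collecting terms and using $\bt_nh_{n-1}=h_n$, the left-hand side $(r_n'+\al_{n-1}r_n)h_{n-1}/\lambda$ (which comes from $\pt\ln h_{n-1}=\al_{n-1}$) equals the right-hand side up to a common summand $\al_{n-1}r_nh_{n-1}/\lambda$ that cancels, leaving $r_n'=\bt_n(R_n-R_{n-1})$. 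Finally $\bt_n=\tfrac12(r_n+n)$ from (\ref{re3}) and $\bt_nR_{n-1}=(r_n^2-\lambda r_n)/R_n$ from (\ref{re5}) convert this into (\ref{ri1}); the division by $R_n$ is harmless because $R_n>0$ for $\lambda>0$.

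The only genuine obstacle is the cross integral $\int_0^\infty P_nP_{n-2}\,y^{\lambda-1}\mathrm{e}^{-y^2+ty}dy$, which escapes the pair $\{R_n,r_n\}$ and forces the use of (\ref{rr}) to close the system; the cancellation of the $\al_{n-1}r_n$ terms along the way is a convenient consistency check. As an independent route one can bypass the integrals altogether: the Toda-type relations $\pt\al_n=\bt_{n+1}-\bt_n$ and $\pt\bt_n=\bt_n(\al_n-\al_{n-1})$, which follow from $\pt\ln h_n=\al_n$, $\pt\mathrm{p}(n,t)=-\bt_n$ and (\ref{al}), together with $R_n=2\al_n-t$, $r_n=2\bt_n-n$ and (\ref{re5}), make (\ref{ri1}) equivalent to $\pt\bt_n=\bt_n(\al_n-\al_{n-1})$ and make (\ref{ri2}), in combination with $\pt\al_n=\bt_{n+1}-\bt_n$, equivalent to the first equation of (\ref{ab3}).
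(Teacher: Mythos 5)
Your argument is correct, but your primary route is genuinely different from the paper's. The paper never differentiates the integrals defining $R_n$ and $r_n$: it only records $\frac{d}{dt}\ln h_n=\al_n$ and $\frac{d}{dt}\mathrm{p}(n,t)=-\bt_n$, converts these via $\bt_n=h_n/h_{n-1}$ and $\al_n=\mathrm{p}(n,t)-\mathrm{p}(n+1,t)$ into the Toda-type relations $2\bt_n'=\bt_n(R_n-R_{n-1})$ and $2\al_n'=1+r_{n+1}-r_n$, and then closes the system purely algebraically using (\ref{re1}), (\ref{re3}), (\ref{re5}) and, crucially for (\ref{ri2}), the compatibility identity (\ref{re2}) in the form $r_{n+1}=\lambda-\al_nR_n-r_n$. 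You instead differentiate the integral representations of $R_n$ and $r_n$ themselves, which obliges you to establish $\pt P_n=-\bt_nP_{n-1}$ and to absorb the cross integral $\int_0^\infty P_nP_{n-2}\,y^{\lambda-1}\mathrm{e}^{-y^2+ty}dy$ via the recurrence (\ref{rr}); all of this is sound, and both computations meet at the same intermediate identities $r_n'=\bt_n(R_n-R_{n-1})$ and $R_n'=\lambda-2r_n-\al_nR_n$. Your route is longer but buys something concrete: it produces $R_n'=\lambda-2r_n-\al_nR_n$ directly from the integrals without consuming (\ref{re2}), so it in effect re-derives that compatibility condition rather than assuming it. The ``independent route'' you sketch in your final sentences is precisely the paper's proof.
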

\begin{proof}
From (\ref{or}) we have
$$
\int_{0}^{\infty}P_{n}^2(x;t)x^\lambda\mathrm{e}^{-x^2+tx}dx=h_{n}(t)
$$
and
$$
\int_{0}^{\infty}P_{n}(x;t)P_{n-1}(x;t)x^\lambda\mathrm{e}^{-x^2+tx}dx=0.
$$
By taking derivatives with respect to $t$, we obtain
\be\label{or1}
\frac{d}{dt}\ln h_{n}(t)=\al_n=\frac{t+R_n(t)}{2}
\ee
and
\be\label{or2}
\frac{d}{dt}\mathrm{p}(n,t)=-\bt_n=-\frac{n+r_n(t)}{2},
\ee
respectively.
Taking account of (\ref{be}), it follows from (\ref{or1}) that
\be\label{equa1}
2\bt_n'=\bt_n R_n(t)-\bt_n R_{n-1}(t).
\ee
Using (\ref{re3}) and (\ref{re5}), we arrive at (\ref{ri1}).

On the other hand, taking a derivative in (\ref{al}) and in view of (\ref{or2}), we find
$$
2\al_n'=1+r_{n+1}(t)-r_n(t)=1+\lambda-\al_nR_n(t)-2r_n(t),
$$
where use has been made of (\ref{re2}) in the second equality. Finally we obtain (\ref{ri2}) with the aid of (\ref{re1}).
\end{proof}
\begin{proof}[$\mathbf{Proof\; of\; Theorem\; \ref{thm2}}$]
Solving $R_n(t)$ from (\ref{ri1}), we have two solutions:
$$
R_n(t)=\frac{r_n'(t)\pm\sqrt{(r_n'(t))^2-2r_n(t)(n+r_n(t))(\lambda-r_n(t))}}{n+r_n(t)}.
$$
Substituting either solution into (\ref{ri2}), we obtain the second-order differential equation satisfied by $r_n(t)$ after removing the square roots:
$$
4\left[r_n''(t)+3r_n^2(t)+2(n-\lambda)r_n(t)-n\lambda\right]^2=t^2\left[(r_n'(t))^2+2r_n^3(t)+2(n-\lambda)r_n^2(t)-2n\lambda r_n(t)\right].
$$
Then equation (\ref{bde}) follows from the relation (\ref{re3}). Under the given transformation, equation (\ref{bde}) turns into the Chazy equation (\ref{ce}).
\end{proof}
\begin{remark}
In the Appendix of the paper \cite{Cosgrove}, Cosgrove gave the relationship between Chazy II system and Painlev\'{e} equations. It is easy to check that our results in Theorem \ref{thm2} are coincident with (A.1) and (A.2) in \cite{Cosgrove} by using Lemma \ref{le}. To be specific, we have $z=\sqrt{2}s$ and $v(z)=\frac{1}{4}q_n'(s)+\frac{1}{4}q_n^2(s)+\frac{1}{2}sq_n(s)-\frac{n}{3}-\frac{\lambda}{6}$, where $q_n(s)$ satisfies the Painlev\'{e} IV equation (\ref{p4}). This corresponds to $A=\frac{1}{2}$, $\epsilon_{1}=1$ and $q=\frac{1}{3}(2n+\lambda)$ (one solution of the cubic equation $4q^3+2\tilde{\al}_1 q+\tilde{\bt}_1=0$ with the values of $\tilde{\al}_1$ and $\tilde{\bt}_1$ given by (\ref{val})) in \cite[(A.2)]{Cosgrove}.
\end{remark}
\begin{remark}
From (\ref{ri2}) we have
$$
r_n(t)=\frac{1}{4}\left(2\lambda-tR_n(t)-R_n^2(t)-2R_n'(t)\right).
$$
Substituting it into (\ref{ri1}), we obtain the second-order differential equation for $R_n(t)$:
$$
8R_n(t)R_n''(t)-4(R_n'(t))^2-3R_n^4(t)-4tR_n^3(t)+(8n+4\lambda+4-t^2)R_n^2(t)+4\lambda^2=0.
$$
Letting $t=2s$ and $R_n(t)=q_n(s)$, we find that $q_n(s)$ satisfies the Painlev\'{e} IV equation
$$
q_n''(s)=\frac{(q_n'(s))^2}{2q_n(s)}+\frac{3}{2}q_n^3(s)+4s\:q_n^2(s)+2(s^2-2n-\lambda-1)q_n(s)-\frac{2\lambda^2}{q_n(s)}.
$$
These results are equivalent to those in Lemma \ref{le}.
\end{remark}
\begin{proof}[$\mathbf{Proof\; of\; Theorem\; \ref{thm3}}$]
Recall that from (\ref{or2}) we have
\be\label{bh}
\bt_n=-\frac{d}{dt}\mathrm{p}(n,t).
\ee
Then equation (\ref{equa1}) becomes
\be\label{equ1}
\bt_n R_n(t)-\bt_n R_{n-1}(t)=-2\frac{d^2}{dt^2}\mathrm{p}(n,t).
\ee
On the other hand, from (\ref{sum}) and (\ref{re1}) we find
\be\label{sum1}
\sum_{j=0}^{n-1}R_j(t)=-2\mathrm{p}(n,t)-nt.
\ee
The combination of (\ref{re3}) and (\ref{bh}) gives
\be\label{rp}
r_n(t)=-2\frac{d}{dt}\mathrm{p}(n,t)-n.
\ee
In view of (\ref{sum1}) and (\ref{rp}), equation (\ref{re4}) turns into
\be\label{equ2}
\bt_n R_n(t)+\bt_n R_{n-1}(t)=t\frac{d}{dt}\mathrm{p}(n,t)-\mathrm{p}(n,t).
\ee

The sum and difference of (\ref{equ1}) and (\ref{equ2}) produce
\be\label{equ3}
2\bt_n R_n(t)=t\frac{d}{dt}\mathrm{p}(n,t)-\mathrm{p}(n,t)-2\frac{d^2}{dt^2}\mathrm{p}(n,t)
\ee
and
\be\label{equ4}
2\bt_n R_{n-1}(t)=t\frac{d}{dt}\mathrm{p}(n,t)-\mathrm{p}(n,t)+2\frac{d^2}{dt^2}\mathrm{p}(n,t),
\ee
respectively. The product of (\ref{equ3}) and (\ref{equ4}) gives
\be\label{pro}
4\bt_n\cdot\bt_nR_n(t) R_{n-1}(t)=\left(t\frac{d}{dt}\mathrm{p}(n,t)-\mathrm{p}(n,t)\right)^2-4\left(\frac{d^2}{dt^2}\mathrm{p}(n,t)\right)^2.
\ee
From (\ref{re5}) and (\ref{rp}) we have
\be\label{brr}
\bt_nR_n(t) R_{n-1}(t)=\left(n+2\frac{d}{dt}\mathrm{p}(n,t)\right)\left(n+\lambda+2\frac{d}{dt}\mathrm{p}(n,t)\right).
\ee
Substituting (\ref{bh}) and (\ref{brr}) into (\ref{pro}), we obtain
\be\label{hnd}
4\left(\frac{d^2}{dt^2}\mathrm{p}(n,t)\right)^2=\left(t\frac{d}{dt}\mathrm{p}(n,t)-\mathrm{p}(n,t)\right)^2+4\frac{d}{dt}\mathrm{p}(n,t)\left(n+2\frac{d}{dt}\mathrm{p}(n,t)\right)
\left(n+\lambda+2\frac{d}{dt}\mathrm{p}(n,t)\right).
\ee
This equation is converted into (\ref{jmo}) under the transformation (\ref{sn}).

Next, we derive the second-order difference equation satisfied by $\mathrm{p}(n,t)$. Substituting (\ref{re1}) and (\ref{re3}) into (\ref{re4}) and using (\ref{sum1}), we have
\be\label{pnt}
\mathrm{p}(n,t)=\bt_n(t-2\al_n-2\al_{n-1}).
\ee
Taking account of (\ref{al}), we can express $\bt_n$ in terms of $\mathrm{p}(n,t)$ and $\mathrm{p}(n\pm 1,t)$:
\be\label{be4}
\bt_n=\frac{\mathrm{p}(n,t)}{t+2\mathrm{p}(n+1,t)-2\mathrm{p}(n-1,t)}.
\ee
Substituting (\ref{re1}) and (\ref{re3}) into (\ref{re5}) gives
\bea\label{tr}
(2\bt_n-n)(2\bt_n-n-\lambda)&=&\bt_n(2\al_n-t)(2\al_{n-1}-t)\nonumber\\
&=&\bt_n\big(2\mathrm{p}(n,t)-2\mathrm{p}(n+1,t)-t\big)\big(2\mathrm{p}(n-1,t)-2\mathrm{p}(n,t)-t\big).
\eea
Inserting (\ref{be4}) into (\ref{tr}), we obtain the second-order difference equation satisfied by $\mathrm{p}(n):=\mathrm{p}(n,t)$:
\bea
&&\left[2\mathrm{p}(n)-n\big(t+2\mathrm{p}(n+1)-2\mathrm{p}(n-1)\big)\right]\left[2\mathrm{p}(n)-(n+\lambda)\big(t+2\mathrm{p}(n+1)-2\mathrm{p}(n-1)\big)\right]\nonumber\\
&=&\mathrm{p}(n)\big(t+2\mathrm{p}(n+1)-2\mathrm{p}(n-1)\big)\big(t+2\mathrm{p}(n+1)-2\mathrm{p}(n)\big)\big(t+2\mathrm{p}(n)-2\mathrm{p}(n-1)\big).\nonumber
\eea
Equation (\ref{dis}) follows from the transformation (\ref{sn}). The proof is complete.
\end{proof}
\begin{remark}
Let $H_n(t)$ be the logarithmic derivative of the Hankel determinant, i.e.,
\be\label{hnt}
H_n(t):=\frac{d}{dt}\ln D_n(t).
\ee
From (\ref{hankel}) we have
$$
H_n(t)=\sum_{j=0}^{n-1}\frac{d}{dt}\ln h_j(t).
$$
Using the first equality in (\ref{or1}) and (\ref{sum}), we obtain
\be\label{hn1}
H_n(t)=-\mathrm{p}(n,t).
\ee
In this case, equation (\ref{jmo}) is equivalent to the result obtained in \cite[Theorem 4.11]{Clarkson}.
\end{remark}

\section{Large $n$ Asymptotics of the Recurrence Coefficients and the Hankel Determinant}
In random matrix theory (RMT), it is known that our Hankel determinant $D_n(t)$ can be viewed as the partition function for the semi-classical Laguerre unitary ensemble \cite[Corollary 2.1.3]{Ismail}. That is,
$$
D_n(t)=\frac{1}{n!}\int_{(0,\infty)^n}\prod_{1\leq i<j\leq n}(x_i-x_j)^2\prod_{k=1}^n x_k^\lambda\mathrm{e}^{-x_k^2+tx_k}dx_k,
$$
where $x_1, x_2, \ldots, x_n$, are the eigenvalues of $n\times n$ Hermitian matrices from the ensemble with the joint probability density function
$$
p(x_1, x_2, \ldots, x_n)=\frac{1}{n!\:D_n(t)}\prod_{1\leq i<j\leq n}(x_i-x_j)^2\prod_{k=1}^n x_k^\lambda\mathrm{e}^{-x_k^2+tx_k}.
$$
See \cite{Mehta,Deift,Forrester2010} for more discussions of this topic.

Dyson's Coulomb fluid approach \cite{Dyson} shows that the collection of eigenvalues (particles) can be approximated as a continuous fluid with a density $\sigma(x)$ supported in $J$, a subset of $\mathbb{R}$, when $n$ is sufficiently large.
It is easy to see that the potential $\mathrm{v}(x)$ in (\ref{po}) is convex for $x\in \mathbb{R}^+$ when $\lambda>0$. In this case, $J$ is a single interval denoted by $(a,b)$; see Chen and Ismail \cite{ChenIsmail} and also \cite[p. 198]{Saff}.

Following \cite{ChenIsmail}, the equilibrium density $\sigma(x)$ is determined by the constrained minimization problem:
$$
\min_{\s}F[\s]\qquad \mathrm{subject}\:\: \mathrm{to}\qquad \int_{a}^{b}\sigma(x)dx=n,
$$
where
$$
F[\s]:=\int_{a}^{b}\s(x)\mathrm{v}(x)dx-\int_{a}^{b}\int_{a}^{b}\s(x)\ln|x-y|\s(y)dxdy
$$
and $\mathrm{v}(x)$ is the potential given by (\ref{po}).

It follows that the density $\s(x)$ satisfies the integral equation
\be\label{ie}
\mathrm{v}(x)-2\int_{a}^{b}\ln|x-y|\s(y)dy=A,\qquad x\in (a,b),
\ee
where $A$ is the Lagrange multiplier that fixes the constraint.
Differentiating the above equation with respect to $x$ gives the singular integral equation
\be\label{sie}
\mathrm{v}'(x)-2P\int_{a}^{b}\frac{\sigma(y)}{x-y}dy=0,\qquad x\in (a,b),
\ee
where $P$ represents the Cauchy principal value. The solution of (\ref{sie}) subject to the boundary condition $\sigma(a)=\sigma(b)=0$ is
\be\label{sigma}
\sigma(x)=\frac{\sqrt{(b-x)(x-a)}}{2\pi^2}P\int_{a}^{b}\frac{\mathrm{v}'(y)}{(y-x)\sqrt{(b-y)(y-a)}}dy
\ee
with a supplementary condition
\be\label{sup1}
\int_{a}^{b}\frac{\mathrm{v}'(x)}{\sqrt{(b-x)(x-a)}}dx=0.
\ee
In addition, using (\ref{sigma}) the normalization condition $\int_{a}^{b}\sigma(x)dx=n$ becomes
\be\label{sup2}
\frac{1}{2\pi}\int_{a}^{b}\frac{x\:\mathrm{v}'(x)}{\sqrt{(b-x)(x-a)}}dx=n.
\ee
The endpoints $a$ and $b$ are determined by (\ref{sup1}) and (\ref{sup2}).
Furthermore, it is shown in \cite{ChenIsmail} that
\begin{subequations}\label{ab}
\be\label{al1}
\al_n=\frac{a+b}{2}+O\left(\frac{\partial^2 A}{\partial t\partial n}\right),
\ee
\be\label{be1}
\bt_n=\left(\frac{b-a}{4}\right)^2\left(1+O\left(\frac{\partial^3 A}{\partial n^3}\right)\right).
\ee
\end{subequations}

Substituting (\ref{po}) for $\mathrm{v}(x)$ into (\ref{sup1}) and (\ref{sup2}) respectively, we get two equations for $a$ and $b$:
\be\label{eq3}
(X-t)^2 Y=\lambda^2,
\ee
\be\label{eq4}
3X^2-2tX-4Y=8n+4\lambda,
\ee
where
$$
X=a+b,\qquad Y=ab.
$$
Eliminating $Y$ from (\ref{eq3}) and (\ref{eq4}), we obtain a quartic equation satisfied by $X$:
$$
(X-t)^2(3X^2-2tX-8n-4\lambda)=4\lambda^2.
$$
This equation has only one positive solution when $n\rightarrow\infty$ and the series expansion reads
\bea
X&=&2 \sqrt{\frac{2n}{3}}+\frac{t}{3}+\frac{t^2+12 \lambda}{12 \sqrt{6n}}-\frac{ t^4+24 \lambda  t^2-288 \lambda ^2}{1152 \sqrt{6}\:n^{3/2}}+\frac{\lambda ^2 t}{8 n^2}\nonumber\\[10pt]
&+&\frac{ t^6+36 \lambda  t^4+9504 \lambda ^2 t^2-13824 \lambda ^3}{55296 \sqrt{6}\:n^{5/2}}+\frac{\lambda ^2 t (t^2-4 \lambda )}{32 n^3}+O(n^{-7/2}).\nonumber
\eea
It follows that
\begin{subequations}\label{ab1}
\bea\label{al2}
\frac{a+b}{2}&=& \sqrt{\frac{2n}{3}}+\frac{t}{6}+\frac{t^2+12 \lambda}{24 \sqrt{6n}}-\frac{ t^4+24 \lambda  t^2-288 \lambda ^2}{2304 \sqrt{6}\:n^{3/2}}+\frac{\lambda ^2 t}{16 n^2}\nonumber\\[10pt]
&+&\frac{ t^6+36 \lambda  t^4+9504 \lambda ^2 t^2-13824 \lambda ^3}{110592 \sqrt{6}\:n^{5/2}}+\frac{\lambda ^2 t (t^2-4 \lambda )}{64 n^3}+O(n^{-7/2}),
\eea
and
\bea\label{be2}
\left(\frac{b-a}{4}\right)^2&=&\frac{X^2-4Y}{16}=\frac{4n+2\lambda+tX-X^2}{8} \nonumber\\[10pt]
&=&\frac{n}{6}+\frac{t\sqrt{n} }{6 \sqrt{6}}+\frac{t^2+6 \lambda}{72}+\frac{ t (t^2+12 \lambda)}{288 \sqrt{6n}}-\frac{\lambda ^2}{16 n}-\frac{ t (t^4+24 \lambda  t^2+3168 \lambda ^2)}{27648 \sqrt{6}\:n^{3/2}}\nonumber\\[10pt]
&-&\frac{\lambda ^2 (3 t^2-4 \lambda)}{128 n^2}+\frac{ t (t^6+36 \lambda  t^4-35424 \lambda ^2 t^2+110592 \lambda ^3)}{1327104 \sqrt{6}\:n^{5/2}}+O(n^{-3}),
\eea
\end{subequations}
where use has been made of (\ref{eq4}) in the second equality.

By using the similar method in \cite{Min2021}, we evaluate the Lagrange multiplier $A$ in the following lemma. The proof will be omitted. The key is that we multiply by $1/\sqrt{(b-x)(x-a)}$ on both sides of equation (\ref{ie}) and then integrate with respect to $x$ from $a$ to $b$.
\begin{lemma}
We have
\bea
A&=&\frac{3a^2+2ab+3b^2}{8}-\frac{(a+b)t}{2}-\lambda\ln\frac{a+b+2\sqrt{ab}}{4}-2n\ln\frac{b-a}{4}\nonumber\\[10pt]
&=&\frac{4n+2\lambda-tX}{4}-\lambda\ln\frac{X^2-tX+2\lambda}{4(X-t)}-n\ln\frac{4n+2\lambda+tX-X^2}{8}.\nonumber
\eea
Then as $n\rightarrow\infty$,
\bea\label{a3}
A&=&-n\ln n+n(1+\ln 6)-t\sqrt{\frac{2n}{3}}-\frac{\lambda}{2}\ln n+\frac{6\lambda\ln 6-t^2}{12}-\frac{ t (t^2+36 \lambda)}{72 \sqrt{6n}}-\frac{\lambda ^2}{2 n}\nonumber\\[10pt]
&+&\frac{ t (t^4+40 \lambda  t^2-1440 \lambda ^2)}{11520 \sqrt{6}\:n^{3/2}}-\frac{\lambda ^2 (3 t^2-20 \lambda)}{96 n^2}+O(n^{-5/2}).
\eea
\end{lemma}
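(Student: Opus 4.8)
The plan is to follow the hint attached to the statement: multiply the equilibrium condition (\ref{ie}) by $\frac{1}{\pi\sqrt{(b-x)(x-a)}}$ and integrate over $x\in(a,b)$, thereby isolating $A$. Since $A$ is constant and $\frac{1}{\pi}\int_a^b \frac{dx}{\sqrt{(b-x)(x-a)}}=1$, the right-hand side contributes exactly $A$. The left-hand side splits into a potential piece $\frac{1}{\pi}\int_a^b \frac{\mathrm{v}(x)}{\sqrt{(b-x)(x-a)}}dx$ and a doubly-integrated logarithmic piece carrying the factor $2$ from (\ref{ie}). The crux of the first half of the proof is that the inner $x$-integration of the logarithmic kernel is independent of $y$,
\be
\frac{1}{\pi}\int_a^b \frac{\ln|x-y|}{\sqrt{(b-x)(x-a)}}\,dx=\ln\frac{b-a}{4},\qquad y\in(a,b),
\ee
this being the equilibrium (capacity) property of the arcsine measure on $(a,b)$. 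Granting it, Fubini together with the normalization $\int_a^b\sigma(y)\,dy=n$ collapses the double integral to $2n\ln\frac{b-a}{4}$, so that
\be
A=\frac{1}{\pi}\int_a^b \frac{\mathrm{v}(x)}{\sqrt{(b-x)(x-a)}}\,dx-2n\ln\frac{b-a}{4}.
\ee

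Next I would evaluate the remaining single integral with $\mathrm{v}(x)=x^2-tx-\lambda\ln x$ term by term against the arcsine measure. The polynomial moments are classical,
$$\frac{1}{\pi}\int_a^b \frac{x\,dx}{\sqrt{(b-x)(x-a)}}=\frac{a+b}{2},\qquad \frac{1}{\pi}\int_a^b \frac{x^2\,dx}{\sqrt{(b-x)(x-a)}}=\frac{3a^2+2ab+3b^2}{8},$$
while the logarithmic moment is the $\xi=0$ case of the external-point evaluation
$$\frac{1}{\pi}\int_a^b \frac{\ln(x-\xi)}{\sqrt{(b-x)(x-a)}}\,dx=\ln\frac{a+b-2\xi+2\sqrt{(a-\xi)(b-\xi)}}{4}\qquad(\xi<a),$$
which gives $\frac{1}{\pi}\int_a^b \frac{\ln x\,dx}{\sqrt{(b-x)(x-a)}}=\ln\frac{a+b+2\sqrt{ab}}{4}$. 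I note in passing that letting $\xi\to a^-$ in this same formula yields the constant $\ln\frac{b-a}{4}$, which furnishes the kernel identity used above. Assembling the three moments reproduces precisely the first displayed expression for $A$ in the lemma.

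For the second (closed) form I would pass to the symmetric variables $X=a+b$, $Y=ab$, using $3a^2+2ab+3b^2=3X^2-4Y$, $(b-a)^2=X^2-4Y$ and $a+b+2\sqrt{ab}=X+2\sqrt{Y}$. The supplementary conditions (\ref{eq3})--(\ref{eq4}) then perform all the simplification: from $(X-t)^2Y=\lambda^2$ one has $\sqrt{Y}=\lambda/(X-t)$ for large $n$, whence $X+2\sqrt{Y}=\frac{X^2-tX+2\lambda}{X-t}$; from $3X^2-2tX-4Y=8n+4\lambda$ one gets $3X^2-4Y=8n+4\lambda+2tX$ and $X^2-4Y=2(4n+2\lambda+tX-X^2)$. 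Substituting these collapses the first form into the stated closed expression in $X$.

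Finally, the asymptotics (\ref{a3}) follow by inserting the previously derived large-$n$ series for $X$ into this closed form and expanding each term. The term $-n\ln\frac{4n+2\lambda+tX-X^2}{8}$ produces the dominant $-n\ln n$ together with the $n$, $\sqrt{n}$ and $\ln n$ contributions; the term $-\lambda\ln\frac{X^2-tX+2\lambda}{4(X-t)}$ produces the $-\frac{\lambda}{2}\ln n$ term with its corrections; and $\frac{4n+2\lambda-tX}{4}$ contributes $-t\sqrt{2n/3}$ along with the lower-order terms. The main obstacle is not conceptual but the bookkeeping of this last expansion: because the logarithms carry the prefactors $n$ and $\lambda$, each inner argument must be expanded to sufficiently high order in $n^{-1/2}$ that the stated $O(n^{-5/2})$ remainder is genuinely controlled and no cross terms between different orders are dropped.
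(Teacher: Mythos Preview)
Your proposal is correct and follows exactly the approach the paper indicates (the paper in fact omits the proof, giving only the hint to multiply (\ref{ie}) by $1/\sqrt{(b-x)(x-a)}$ and integrate over $[a,b]$, which is precisely what you carry out). The arcsine-measure identities you invoke for the kernel and for the three moments of $\mathrm{v}(x)$ are the standard ones, and your reduction to the $X$-form via (\ref{eq3})--(\ref{eq4}) matches the lemma line by line; the only quibble is that $\sqrt{Y}=\lambda/(X-t)$ holds exactly (since $X>t$ and $\lambda>0$), not merely ``for large $n$''.
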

\begin{proof}[$\mathbf{Proof\; of\; Theorem\; \ref{abt}}$]
From (\ref{ab}), (\ref{ab1}) and (\ref{a3}) we see that $\al_n$ and $\bt_n$ have the large $n$ expansion form
\begin{subequations}\label{ab2}
\be\label{al3}
\al_n=\sqrt{\frac{2n}{3}}+\sum_{j=0}^{\infty}\frac{a_j}{n^{\frac{j}{2}}}
\ee
and
\be\label{be3}
\bt_n=\frac{n}{6}+\sum_{j=-1}^{\infty}\frac{b_j}{n^{\frac{j}{2}}},
\ee
\end{subequations}
respectively.
Substituting (\ref{ab2}) into the discrete system (\ref{ab3}) and taking a large $n$ limit, we obtain the expansion coefficients $a_j$ and $b_j$ recursively by equating the powers of $n$:
$$
a_0=\frac{t}{6},\qquad\qquad b_{-1}=\frac{t}{6 \sqrt{6}},\qquad\qquad a_1=\frac{t^2+12 (1+\lambda)}{24 \sqrt{6} },\qquad\qquad b_0=\frac{t^2+6 \lambda}{72},\\[8pt]
$$
$$
a_2=0,\quad b_{1}=\frac{t (t^2+12 \lambda)}{288 \sqrt{6} },\quad a_3=-\frac{t^4+24 t^2(1+\lambda) -48 (6 \lambda ^2-6 \lambda -5)}{2304 \sqrt{6}},\quad b_2=\frac{2-9 \lambda ^2}{144 },
$$
and so on. The theorem is then established.
\end{proof}
\begin{remark}
Very recently, Clarkson and Jordaan studied the generalised Airy polynomials and derived the large $n$ formal asymptotic expansions for the recurrence coefficients; see \cite[Lemma 3.15]{CJ}.
Our approach can be applied to justify the assumption in the proof; see formula (46) in \cite{CJ}.
\end{remark}
\begin{proof}[$\mathbf{Proof\; of\; Theorem\; \ref{hd}}$]
Recall that $\mathrm{p}(n,t)$ can be expressed in terms of $\al_n$ and $\bt_n$ (see (\ref{pnt})):
$$
\mathrm{p}(n,t)=\bt_n(t-2\al_n-2\al_{n-1}).
$$
Substituting (\ref{aln}) and (\ref{ben}) into the above, we obtain (\ref{hne}) after taking a large $n$ limit.
\end{proof}

\begin{proof}[$\mathbf{Proof\; of\; Theorem\; \ref{hkd}}$]
Following the similar development in \cite{Min2021,Min2022} and using the fact
$$
\bt_n=\frac{D_{n+1}(t)D_{n-1}(t)}{D_{n}^2(t)},
$$
we obtain the large $n$ asymptotic expansion of $D_n(t)$:
\bea
\ln D_n(t)&=&\frac{1}{2} n^2 \ln n-\frac{3+2\ln 6}{4}  n^2+\frac{2}{3} \sqrt{\frac{2}{3}}\: n^{3/2}t+\frac{\lambda}{2}     n \ln n+C_1 n\nonumber\\[10pt]
&+&\frac{t (t^2+36 \lambda)\sqrt{n}}{36 \sqrt{6}}+\frac{3 \lambda ^2-1}{6} \ln n +C_2+\frac{t \left[t^4+40 \lambda  t^2+240 (1-6 \lambda ^2)\right]}{5760 \sqrt{6n}}\nonumber\\[10pt]
&-&\frac{(9 \lambda ^2-2) t^2-12 \lambda(5\lambda^2-2)}{288 n}+O(n^{-3/2}),\nonumber
\eea
where $C_1$ and $C_2$ are two undetermined constants independent of $n$.

On the other hand, it is easy to see from (\ref{hnt}) and (\ref{hn1}) that
$$
\ln\frac{D_n(t)}{D_n(0)}=\int_{0}^{t}H_n(u)du=-\int_{0}^{t}\mathrm{p}(n,u)du.
$$
Taking account of (\ref{hne}), we find
\bea\label{dnt}
\ln\frac{D_n(t)}{D_n(0)}&=&\frac{2}{3} \sqrt{\frac{2}{3}}\: n^{3/2} t+\frac{n t^2}{12}+\frac{t (t^2+36 \lambda)\sqrt{n}}{36 \sqrt{6}}+\frac{t^2(t^2+36 \lambda)  }{864}+\frac{t \left[t^4+40 \lambda  t^2+240 (1-6 \lambda ^2)\right]}{5760 \sqrt{6n} } \nonumber\\[10pt]
&+&\frac{\left(2-9 \lambda ^2\right) t^2}{288 n}-\frac{t \left[5 t^6+252 \lambda  t^4+1680 (66 \lambda ^2-17) t^2-60480 \lambda  (8 \lambda ^2-1)\right]}{5806080 \sqrt{6}\: n^{3/2}}\nonumber\\[10pt]
&+&\frac{t^2 \left[(7-27 \lambda ^2) t^2+24 \lambda  (9 \lambda ^2-2)\right]}{13824 n^2}+O(n^{-5/2}).
\eea
Next, we will use the results of Dea$\mathrm{\tilde{n}}$o and Simm \cite{Deano} (see also \cite{Han}) to evaluate $D_n(0)$. By making a simple change of variables, we have
$$
D_n(0)=\frac{n^{n(n+\lambda)/2}}{n!}Z_n(1),
$$
where
$$
Z_n(s):=\int_{(0,\infty)^n}\prod_{1\leq i<j\leq n}(x_i-x_j)^2\prod_{k=1}^n x_k^\lambda\mathrm{e}^{-n[x_k+s(x_k^2-x_k)]}dx_k,\qquad 0\leq s\leq 1.
$$
It follows that
$$
\ln D_n(0)=\frac{1}{2}n^2\ln n+\frac{\lambda}{2}     n \ln n-\ln n-\ln \Gamma(n)+\ln\frac{Z_n(1)}{Z_n(0)}+\ln Z_n(0).
$$
Taking account of (2.31) and (A.3) in \cite{Deano} and with the aid of Stirling's formula (see, e.g., \cite[p. 895]{Gradshteyn}), we obtain
\bea\label{dn0}
\ln D_n(0)&=&\frac{1}{2} n^2 \ln n-\frac{3+2\ln 6}{4}  n^2+\frac{\lambda}{2}     n \ln n+\left[\ln (2 \pi )-\frac{\lambda  (1+\ln 6)}{2} \right]n+\frac{3 \lambda ^2-1}{6} \ln n\nonumber\\[10pt]
&+&\frac{1}{24} \left[48 \zeta'(-1)-24 \ln G(\lambda +1)-12 \lambda ^2 \ln \frac{3}{2}+12 \lambda  \ln (2 \pi )-4 \ln 2+3\ln 3\right]\nonumber\\[10pt]
&+&O(n^{-1}),
\eea
where $\zeta(\cdot)$ is the Riemann zeta function and $G(\cdot)$ is the Barnes $G$-function \cite{Barnes,Voros}.

The combination of (\ref{dnt}) and (\ref{dn0}) shows that
$$
C_1=\frac{t^2}{12}+\ln (2 \pi )-\frac{\lambda  (1+\ln 6)}{2},
$$
$$
C_2=\frac{t^2(t^2+36 \lambda)  }{864}+\frac{1}{24} \left[48 \zeta'(-1)-24 \ln G(\lambda +1)-12 \lambda ^2 \ln \frac{3}{2}+12 \lambda  \ln (2 \pi )-4 \ln 2+3\ln 3\right].
$$
This completes the proof.
\end{proof}

\section*{Acknowledgments}
The work of C. Min was partially supported by the National Natural Science Foundation of China under grant number 12001212, by the Fundamental Research Funds for the Central Universities under grant number ZQN-902 and by the Scientific Research Funds of Huaqiao University under grant number 17BS402. The work of
Y. Chen was partially supported by the Macau Science and Technology Development Fund under grant number FDCT 0079/2020/A2.
\section*{Conflicts of Interest}
The authors have no conflicts of interest to declare that are relevant to the content of this article.

\section*{Data Availability Statements}
Data sharing not applicable to this article as no datasets were generated or analysed during the current study.


\begin{thebibliography}{}
%
%
\bibitem{Alhaidari}
{A. D. Alhaidari}, {Series solutions of Heun-type equation in terms of orthogonal polynomials}, {\em J. Math. Phys.} {\bf 59} ({2018}) {113507}.
\bibitem{Barnes}
{E. W. Barnes}, The theory of the $G$-function, {\em Quart. J. Pure Appl. Math.} {\bf 31} ({1900}) {264--314}.
\bibitem{Boelen}
{L. Boelen} and {W. Van Assche}, {Discrete Painlev\'{e} equations for recurrence coefficients of semiclassical Laguerre polynomials}, {\em Proc. Amer. Math. Soc.} {\bf 138} ({2010}) {1317--1331}.
\bibitem{Chazy1909}
{J. Chazy}, {Sur les \'{e}quations diff\'{e}rentielles du second ordre \`{a} points critiques fixes}, {\em C. R. Acad. Sci. Paris} {\bf 148} ({1909}) {1381--1384}.
\bibitem{Chazy1911}
{J. Chazy}, {Sur les \'{e}quations diff\'{e}rentielles du troisi\`{e}me ordre et d'ordre sup\'{e}rieur dont l'int\'{e}grale g\'{e}n\'{e}rale a ses points critiques fixes}, {\em Acta Math.} {\bf 34} ({1911}) {317--385}.
\bibitem{CFZ}
{Y. Chen}, {G. Filipuk} and {L. Zhan}, {Orthogonal polynomials, asymptotics, and Heun equations}, {\em J. Math. Phys.} {\bf 60} ({2019}) {113501}.
\bibitem{ChenIsmail}
{Y. Chen} and {M. E. H. Ismail}, {Thermodynamic relations of the Hermitian matrix ensembles}, {\em J. Phys. A: Math. Gen.} {\bf 30} ({1997}) {6633--6654}.
\bibitem{ChenIts}
{Y. Chen} and {A. Its}, {Painlev\'{e} III and a singular linear statistics in Hermitian random matrix ensembles, I}, {\em J. Approx. Theory} {\bf 162} ({2010}) {270--297}.
\bibitem{Clarkson}
{P. A. Clarkson} and {K. Jordaan}, {The relationship between semiclassical Laguerre polynomials and the fourth Painlev\'{e} equation}, {\em Constr. Approx.} {\bf 39} ({2014}) {223--254}.
\bibitem{CJ}
{P. A. Clarkson} and {K. Jordaan}, {Generalised Airy polynomials}, {\em J. Phys. A: Math. Theor.} {\bf 54} ({2021}) 185202 (28pp).
\bibitem{Clarkson3}
{P. A. Clarkson}, {K. Jordaan} and {A. Kelil}, {A generalized Freud weight}, {\em Stud. Appl. Math.} {\bf 136} ({2016}) {288--320}.
\bibitem{Cosgrove}
{C. M. Cosgrove}, {Chazy's second-degree Painlev\'{e} equations}, {\em J. Phys. A: Math. Gen.} {\bf 39} ({2006}) {11955--11971}.
\bibitem{Dai}
{D. Dai} and {L. Zhang}, {Painlev\'{e} VI and Hankel determinants for the generalized Jacobi weight}, {\em J. Phys. A: Math. Theor.} {\bf 43} ({2010}) {055207 (14pp)}.
\bibitem{Deano}
{A. Dea$\mathrm{\tilde{n}}$o} and {N. J. Simm}, {On the probability of positive-definiteness in the gGUE via semi-classical Laguerre polynomials}, {\em J. Approx. Theory} {\bf 220} ({2017}) {44--59}.
\bibitem{Deift}
{P. Deift}, {\em Orthogonal Polynomials and Random Matrices: A Riemann-Hilbert Approach}, {Courant Lecture Notes 3}, {New York University}, {New York}, {1999}.
\bibitem{Dyson}
{F. J. Dyson}, {Statistical theory of the energy levels of complex systems, I, II, III}, {\em J. Math. Phys.} {\bf 3} ({1962}) {140--156, 157--165, 166--175}.
\bibitem{Filipuk}
{G. Filipuk}, {W. Van Assche} and {L. Zhang}, {The recurrence coefficients of semi-classical Laguerre polynomials and the fourth Painlev\'{e} equation}, {\em J. Phys. A: Math. Theor.} {\bf 45} ({2012}) {205201 (13 pp)}.
\bibitem{Forrester2010}
{P. J. Forrester}, {\em Log-Gases and Random Matrices}, {Princeton University Press}, {Princeton}, {2010}.
\bibitem{Gradshteyn}
{I. S. Gradshteyn} and {I. M. Ryzhik}, {\em Table of Integrals, Series, and Products: Seventh Edition}, {Academic Press}, {New York}, {2007}.
\bibitem{Gromak}
{V. I. Gromak}, {I. Laine} and {S. Shimomura}, {\em Painlev\'{e} Differential Equations in the Complex Plane}, {Walter de Gruyter}, {Berlin}, {2002}.
\bibitem{Han}
{P. Han} and {Y. Chen}, {The recurrence coefficients of a semi-classical Laguerre polynomials and the large $n$ asymptotics of the associated Hankel determinant}, {\em Random Matrices: Theor. Appl.} {\bf 6} ({2017}) {1740002} (20 pages).
\bibitem{Ismail}
{M. E. H. Ismail}, {\em Classical and Quantum Orthogonal Polynomials in One Variable}, {Encyclopedia of Mathematics and its Applications 98}, {Cambridge University Press}, {Cambridge}, {2005}.
\bibitem{Jimbo1981}
{M. Jimbo} and {T. Miwa}, {Monodromy preserving deformation of linear ordinary differential equations with rational coefficients. II}, {\em Physica D} {\bf 2} ({1981}) {407--448}.
\bibitem{Lyu2017}
{S. Lyu} and {Y. Chen}, {The largest eigenvalue distribution of the Laguerre unitary ensemble}, {\em Acta Math. Sci.} {\bf 37} ({2017}) {439--462}.
\bibitem{MNR}
{A. P. Magnus}, {F. Ndayiragije} and {A. Ronveaux}, {About families of orthogonal polynomials satisfying Heun's differential equation}, {\em J. Approx. Theory} {\bf 263} ({2021}) {105522}.
\bibitem{Mehta}
{M. L. Mehta}, {\em Random Matrices}, {3rd edn.}, {Elsevier}, {New York}, {2004}.
\bibitem{Min2019}
{C. Min} and {Y. Chen}, {Painlev\'{e} transcendents and the Hankel determinants generated by a discontinuous Gaussian weight}, {\em Math. Meth. Appl. Sci.} {\bf 42} ({2019}) {301--321}.
\bibitem{Min2021}
{C. Min} and {Y. Chen}, {Differential, difference, and asymptotic relations for Pollaczek-Jacobi type orthogonal polynomials and their Hankel determinants}, {\em Stud. Appl. Math.} {\bf 147} ({2021}) {390--416}.
\bibitem{Min2022}
{C. Min} and {Y. Chen}, {Hankel determinant and orthogonal polynomials for a perturbed Gaussian weight: from finite $n$ to large $n$ asymptotics}, {arXiv: 2203.10526} {(28 pp)}.
\bibitem{Ronveaux}
{A. Ronveaux}, {\em Heun's Differential Equations}, {Oxford Science Publications}, {Oxford}, {1995}.
\bibitem{Saff}
{E. B. Saff} and {V. Totik}, {\em Logarithmic Potentials with External Fields}, {Springer}, {Berlin}, {1997}.
\bibitem{Sogo}
{K. Sogo}, {Time-dependent orthogonal polynomials and theory of soliton-applications to matrix model, vertex model and level statistics}, {\em J. Phys. Soc. Japan} {\bf 62} ({1993}) {1887--1894}.
\bibitem{Szego}
{G. Szeg\"{o}}, {\em Orthogonal Polynomials}, {4th edn.}, {Amer. Math. Soc.}, {Providence, RI}, {1975}.
\bibitem{VanAssche}
{W. Van Assche}, {\em Orthogonal Polynomials and Painlev\'{e} Equations}, {Australian Mathematical Society Lecture Series 27}, {Cambridge University Press}, {Cambridge}, {2018}.
\bibitem{Voros}
{A. Voros}, Spectral functions, special functions and the Selberg zeta function, {\em Commun. Math. Phys.} {\bf 110} ({1987}) {439--465}.
\bibitem{Witte}
{N. S. Witte}, {P. J. Forrester} and {C. M. Cosgrove}, {Gap probabilities for edge intervals in finite Gaussian and Jacobi unitary matrix ensembles}, {\em Nonlinearity} {\bf 13} ({2000}) {1439--1464}.
\end{thebibliography}
\end{document}